\newtheorem{thm}{Theorem} [section]
\newtheorem{cor}[thm]{Corollary}
\newtheorem{lem}[thm]{Lemma}
\newtheorem{exm}[thm]{Example}
\newtheorem{prop}[thm]{Proposition}
\theoremstyle{definition}
\newtheorem{defn}[thm]{Definition}
\theoremstyle{remark}
\newtheorem{rem}[thm]{Remark}
\numberwithin{equation}{section}
\begin{document}
\title{On vector parking functions and q-analogue }

\author[Wenkai Yang]{Wenkai Yang}
\address{Center for Applied Mathematics, Tianjin University, Tianjin 300072, P.R. China}
\email{3016210052@tju.edu.cn}

\thanks{This work was supported by National Natural Science Foundation of China (Grant No. 12071344).}
\keywords{Abel identity, context-free grammar, labeled multicolored trees, q-analogue, vector parking functions }
\subjclass[2024]{05A15,05A19,05A30}

\maketitle

\begin{abstract}
In 2000, it was demonstrated that the set of $x$-parking functions of length $n$, where $x$=($a,b,...,b$) $\in \mathbbm{N}^n$, is equivalent to the set of rooted multicolored forests on [$n$]=\{1,...,$n$\}. 
In 2020, Yue Cai and Catherine H. Yan systematically investigated the properties of rational parking functions. Subsequently, a series of Context-free grammars possessing the requisite property were introduced by William Y.C. Chen and Harold R.L. Yang in 2021.  
In this paper, I discuss generalized parking functions in terms of grammars.
The primary result is to obtain the q-analogue about the number of '1's in certain vector parking functions with the assistance of grammars.
\end{abstract}

\section{Introduction}

The concept of parking functions originated in 1966 through Konheim and Weiss's investigation\cite{ref1} of a prominent computer algorithm. A classical parking function is formally characterized as a positive integer sequence denoted by $a = (a_1,a_2,...,a_n)$, where each $a_{(i)}$, representing the $i$-th non-decreasing order statistic, adheres to the constraint $ a_{(i)} \leq i$ for all $i$ within the range from $1$ to $n$. Moreover, parking functions encompass diverse extensions, in this paper I will only consider $x$-parking functions and vector parking functions.

In 1968, Sch$\ddot{u}$tzenberger\cite{ref17} established a bijection between parking functions of length $n$ and rooted labeled trees on $[n+1]=\{1,...,n+1\}$, both having a cardinality of $(n+1)^{n-1}$. 

Pollak (see Riordan\cite{ref16}), D. E. Knuth\cite{ref3}, D. Foata, and J. Riordan\cite{ref15} demonstrated that with several bijections. 
Employing Shor's recursive methodology, a context-free grammar for the enumeration of rooted trees can be systematically introduced.

\begin{defn}[Formal derivative\cite{ref13}]\label{defn1.1}
	A context-free grammar $G$ is a set of substitution rules defined over an alphabet $A$.  Given a context-free grammar, a formal derivative $D$ can be defined as a differential operator acting on polynomials or Laurent polynomials over $A$. Precisely, $D$ is a linear operator satisfying the following relations:

	(1) $D(u+v)=D(u)+D(v)$ and $D(uv)=D(u)v+uD(v)$;
	
	(2) If $f(x)$ is an analytic function, then $Df(w)= \frac{\partial f(w)}{\partial w} D(w)$;
	
	(3) Following the substitution rule, if $G$ has $v \rightarrow w$ then $D(v)=w$; otherwise, $D(v)=0$, where $v$ is referred to as a constant.
	
	The operator $D$ is regarded as the formal derivative generated by the grammar $G$.
\end{defn} 

To generate rooted trees, a well-known context-free grammar $G$ proposed by Dumont and Ramamonjisoa \cite{ref2} is defined as follows:
\begin{equation}\label{equ1.1}
G:A\rightarrow A^3S, S\rightarrow AS^2.
\end{equation}

The corresponding formal derivative in the grammar is expressed as $D=A^3S \frac{\partial}{\partial A} + AS^2 \frac{\partial}{\partial S}$.  
Furthermore, let $x = (x_1, x_2,..., x_n) \in \mathbbm{N}^n$. 
An $x$-parking function $'a'$ is defined as a sequence of positive integers, where the non-decreasing rearrangement $ a_{(1)} \leq a_{(2)} \leq \cdots \leq a_{(n)}$ satisfies $a_{(i)} \leq x_1 +\cdots +x_i$.
When $x = (1,1,...,1)$, the $x$-parking function degenerates to the classical parking function. In 1999, Pitman and Stanley presented the following result. 

\begin{lem}{\cite{ref5}}\label{lem1.4}
	The quantity of $x$-parking functions, denoted as $P_n(x)$, is defined as:
	\begin{equation}\label{equ1.2}
	P_n(x) =\sum_{(a_1,...,a_n)\in park(n)} x_{a_1}\cdots x_{a_n} \in \mathbbm{N}^n, 
	\end{equation}
	where $park(n)$ is the set of classical parking functions of length $n$. 
Specifically, for the basic $x$-parking function with  $x=(\alpha,\beta,...,\beta)\in \mathbbm{N}^n$, there is  $P_n(x)=\alpha(\alpha+n\beta)^{n-1}.$
\end{lem}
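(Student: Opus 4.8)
The plan is to split the argument into two parts: first establish the refinement identity \eqref{equ1.2} for an arbitrary $x\in\mathbbm{N}^n$, and then evaluate it when $x=(\alpha,\beta,\dots,\beta)$. For \eqref{equ1.2} I would build a weight-preserving fibration of the set of $x$-parking functions over $park(n)$. Write $X_i=x_1+\dots+x_i$ (with $X_0=0$) and cut $\{1,2,\dots,X_n\}$ into consecutive blocks $B_i=\{X_{i-1}+1,\dots,X_i\}$, so that $|B_i|=x_i$ and $B_i=\varnothing$ when $x_i=0$. Every $x$-parking function $a=(a_1,\dots,a_n)$ has all entries in $\{1,\dots,X_n\}$ (indeed $a_{(n)}\le X_n$), so I may define $\Phi(a)=(b_1,\dots,b_n)$ where $b_i$ is the index of the block containing $a_i$.

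The key point is that $\Phi$ is weakly order preserving, so the sorted word of $\Phi(a)$ is obtained by replacing each $a_{(i)}$ with the index of its block; since $\max B_i=X_i$, we get $a_{(i)}\le X_i\iff a_{(i)}\in B_1\cup\dots\cup B_i\iff b_{(i)}\le i$. Hence $a$ is an $x$-parking function precisely when $\Phi(a)\in park(n)$. For a fixed $b\in park(n)$ the fiber $\Phi^{-1}(b)$ is $B_{b_1}\times\dots\times B_{b_n}$, of size $\prod_{i=1}^{n}x_{b_i}$ (and empty exactly when some $x_{b_i}=0$). Summing over $park(n)$ gives \eqref{equ1.2}; the degenerate case $x_1=0$ (no $x$-parking functions) is automatically covered, since every $b\in park(n)$ has $b_{(1)}=1$ and hence carries a factor $x_1$.

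Specializing to $x=(\alpha,\beta,\dots,\beta)$ turns $\prod_i x_{b_i}$ into $\alpha^{z(b)}\beta^{n-z(b)}$, where $z(b)=\#\{i:b_i=1\}\ge1$, so, writing $N_{n,k}$ for the number of $b\in park(n)$ with $z(b)=k$,
\begin{equation*}
P_n(x)=\sum_{b\in park(n)}\alpha^{z(b)}\beta^{n-z(b)}=\sum_{k=1}^{n}N_{n,k}\,\alpha^{k}\beta^{n-k}.
\end{equation*}
I would then use the classical evaluation $N_{n,k}=\binom{n-1}{k-1}n^{n-k}$ (equivalently $\sum_{b\in park(n)}t^{z(b)}=t(t+n)^{n-1}$): under the standard correspondence between $park(n)$ and rooted forests of labeled trees on $[n]$ — or rooted labeled trees on $[n+1]$ hung from $n+1$ — the number of $1$'s of a parking function becomes the number of trees of the forest (the number of children of $n+1$), and the number of rooted forests on $[n]$ with $k$ trees is $\binom{n-1}{k-1}n^{n-k}$ by the generalized Cayley formula; alternatively $N_{n,k}=\binom{n-1}{k-1}n^{n-k}$ follows from a short Lagrange-inversion computation applied to $T(x)=xe^{T(x)}$. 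Reindexing by $j=k-1$ and using the binomial theorem,
\begin{align*}
P_n(x)&=\sum_{k=1}^{n}\binom{n-1}{k-1}n^{n-k}\alpha^{k}\beta^{n-k}\\
&=\alpha\sum_{j=0}^{n-1}\binom{n-1}{j}(n\beta)^{n-1-j}\alpha^{j}=\alpha(\alpha+n\beta)^{n-1}.
\end{align*}

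I expect \eqref{equ1.2} to be essentially routine once the block picture is in place; the substance of the closed formula is the identity $N_{n,k}=\binom{n-1}{k-1}n^{n-k}$, and the main obstacle is to arrange a bijection (or recursion) in which the ``number of $1$'s'' statistic corresponds cleanly to the ``number of trees'' statistic while keeping the labels straight. A self-contained alternative that bypasses this, fitting the circle of ideas around Pollak's argument, is to let $\mathbb{Z}/(\alpha+n\beta)\mathbb{Z}$ act by simultaneous translation on the preference words of $n$ cars on a circular lot of $\alpha+n\beta$ spots: each orbit has size $\alpha+n\beta$ and contains exactly $\alpha$ words that are $x$-parking functions under the block identification, giving $P_n(x)=\alpha(\alpha+n\beta)^{n-1}$ directly — there the obstacle moves to pinning down which cyclic shifts count and proving the per-orbit count is exactly $\alpha$ independently of the orbit. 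A derivation in the grammatical spirit of the present paper, starting from the grammar \eqref{equ1.1} (or a color-refined version of it), is also available.
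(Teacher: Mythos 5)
Your argument is correct, and it is worth noting that the paper itself offers no proof of this lemma at all: it is quoted from Pitman--Stanley \cite{ref5}, and the only place the paper comes close to re-deriving it is Lemma \ref{thm1.2}, where the special case $P_n(\alpha,\beta,\dots,\beta)=\alpha(\alpha+n\beta)^{n-1}$ is obtained through the grammar $H$ of (\ref{equ1.3}) together with Yan's bijection to multicolored labeled forests, using the planted-forest count $p_k(n)=\binom{n-1}{k-1}n^{n-k}$. Your route is genuinely different in its first half: the block fibration $\Phi$ over $park(n)$ is a clean, elementary proof of the full refinement (\ref{equ1.2}) for arbitrary $x\in\mathbbm{N}^n$ (including the degenerate cases with zero entries, which you handle correctly), something the paper never establishes internally. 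In the second half you converge on essentially the same combinatorial kernel as the paper, since $N_{n,k}=\binom{n-1}{k-1}n^{n-k}$ is exactly the paper's $p_k(n)$, only phrased as the ones-statistic (Foata--Riordan) evaluation $\sum_b t^{z(b)}=t(t+n)^{n-1}$ rather than as a grammar/forest weight; citing this classical count, or sketching it via the forest bijection, Lagrange inversion, or the Pollak cycle argument, is on the same footing as the paper's own reliance on \cite{ref7} and \cite{ref15}, so there is no real gap. What your approach buys is self-containedness and generality (the full weighted identity, from which the closed form is a two-line binomial computation), and as a bonus the same fibration immediately yields the $q$-refinement $\sum_c q^{Z(c)}=(q+\alpha-1)(q+\alpha-1+n\beta)^{n-1}$ that the paper later extracts from the grammar; what the paper's grammar route buys is compatibility with the differential-operator machinery used in its later sections.
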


Catherine H.Yan provides a bijection of basic $x$-parking functions to multicolored forests.\cite{ref6} 
On the other hand, the enumerations of rooted trees can be described succinctly with grammar $H$ in \cite{ref4}:
\begin{equation}\label{equ1.3}
H:z\rightarrow zxy, x\rightarrow xyw, y\rightarrow y^3w, w\rightarrow yw^2.
\end{equation}

Let $a$ and $b$ be coprime positive integers assumed throughout the section on vector parking functions. In \cite{ref8}, an $(a, b)$-Dyck path is defined as a lattice path from $(0, 0)$ to $(b, a)$ composed of steps $\{\text{North}, \text{East}\}$ that remains weakly below the diagonal $y = \frac{a}{b}x$. 
An $(a, b)$-parking function of length $'b'$ is characterized by an $(a, b)$-Dyck path with the E-steps labeled from the set $\{0, 1,..., b-1\}$, such that the labels increase in each consecutive step of the E-steps. A combinatorial proof establishing a bijection from $(a, b)-PF_b$ to $[a-1]_0\;^{b-1}$ is presented in \cite{ref8}. Additionally, the authors derive the expression for $PF_{nb}(u)$ in terms of Schur functions.

\begin{lem}{\cite{ref14}}\label{lem1.5}
	For any coprime pair $(a, b)$ with $gcd(a, b) = 1$, the number of $(a, b)$-parking functions of length $b$ is precisely $a^{b-1}$. 
\end{lem}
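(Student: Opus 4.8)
The plan is to reduce the labelled-lattice-path description of an $(a,b)$-parking function to a purely numerical condition and then run a Pollak-style counting argument over a cyclic group of order $a$. (Lemma~\ref{lem1.4} yields the statement only in the special case $a\equiv 1\pmod b$, where the associated staircase vector has the form $(1,\lfloor a/b\rfloor,\dots,\lfloor a/b\rfloor)$ and $\alpha(\alpha+n\beta)^{n-1}=1\cdot(1+b\lfloor a/b\rfloor)^{b-1}=a^{b-1}$; for general coprime $(a,b)$ the staircase increments form a non-constant word and a direct argument is needed.)

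First I would set up the numerical model. Along any $(a,b)$-Dyck path the heights of the $b$ $E$-steps, read in path order, form a weakly increasing sequence, and the requirement that labels increase within each row says precisely that recording, for the $E$-step carrying label $\ell$, its height in coordinate $\ell+1$ gives a bijection between $(a,b)$-parking functions of length $b$ and those $(c_1,\dots,c_b)\in\{0,1,\dots,a-1\}^{b}$ whose nondecreasing rearrangement satisfies $c_{(i)}\le\lfloor (i-1)a/b\rfloor$ for all $i\in\{1,\dots,b\}$. Writing $m_k=\#\{j:c_j=k\}$ and $C(v)=m_0+\cdots+m_v$, the binding inequality at level $v$ is the one indexed by $i=\lceil (v+1)b/a\rceil$, so the sorted conditions amount to $C(v)\ge\lceil (v+1)b/a\rceil$ for $v=0,\dots,a-1$; since $C(v)$ is an integer and $C(a-1)=b$, this rearranges to
\begin{equation}\label{eq:pf-cond}
\sum_{k=0}^{v}\bigl(a m_k-b\bigr)\ \ge\ 0 \qquad\text{for all } v=0,1,\dots,a-1 ,
\end{equation}
a chain of partial-sum conditions whose total $\sum_{k=0}^{a-1}(am_k-b)=ab-ab=0$, so the last partial sum in \eqref{eq:pf-cond} is exactly $0$.

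Now the Pollak step. Let $\mathbb{Z}/a\mathbb{Z}$ act on $\{0,\dots,a-1\}^{b}\cong(\mathbb{Z}/a\mathbb{Z})^{b}$ by $(c_1,\dots,c_b)\mapsto\bigl((c_1+t)\bmod a,\dots,(c_b+t)\bmod a\bigr)$; since the stabiliser of every tuple is trivial, each orbit has exactly $a$ elements, so there are $a^{b}/a=a^{b-1}$ orbits and it suffices to show that each orbit contains exactly one parking function. Adding $t$ cyclically rotates $(m_0,\dots,m_{a-1})$, hence rotates the sequence $x_k:=am_k-b$, and by the previous paragraph being a parking function is detected by \eqref{eq:pf-cond} applied to the rotated sequence. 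Put $T_0=0$ and $T_v=x_0+\cdots+x_{v-1}$ for $1\le v\le a$ (so $T_a=0$), extended periodically; the rotation beginning at position $p$ satisfies \eqref{eq:pf-cond} if and only if $T_p=\min_{0\le v\le a-1}T_v$. Finally, if $T_j=T_{j'}$ with $0\le j<j'\le a-1$, then $a\sum_{k=j}^{j'-1}m_k=b(j'-j)$, which forces $a\mid (j'-j)$ because $\gcd(a,b)=1$ — impossible since $0<j'-j<a$; hence the minimum is attained at a single position, there is exactly one admissible rotation, and so exactly one parking function per orbit. Therefore the number of $(a,b)$-parking functions of length $b$ is $a^{b-1}$.

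The cycle-lemma endgame is short and is precisely where the hypothesis $\gcd(a,b)=1$ is consumed — to force the cyclic walk $(T_v)$ to attain its minimum uniquely — so that is not where I expect difficulty. The delicate point is the reduction: one must verify that the labelled $(a,b)$-Dyck-path data is faithfully and bijectively encoded by $(c_1,\dots,c_b)$, and that identifying the binding sorted inequality at each level and passing to \eqref{eq:pf-cond} are genuine equivalences. If that bookkeeping turns out to be cumbersome, an alternative is to quote the explicit bijection $(a,b)\text{-}PF_b\to[a-1]_0^{\,b-1}$ from \cite{ref8}, which makes the count $a^{b-1}$ immediate; the Pollak argument above is proposed chiefly because it stays within the combinatorial idiom of the rest of the paper and isolates the exact role of coprimality.
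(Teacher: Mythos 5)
Your proof is correct, and it is genuinely different from what the paper does: the paper offers no proof of this lemma at all, quoting it from Armstrong--Loehr--Warrington \cite{ref14} (and pointing to the explicit bijection $(a,b)\text{-}PF_b\to[a-1]_0^{\,b-1}$ of \cite{ref8}); later, in Section 4, the count $a^{b-1}$ is re-derived by viewing $(a,b)$-parking functions as rational $x$-parking functions with $x=(1,\tfrac{a-1}{b},\dots,\tfrac{a-1}{b})$ and invoking Theorem \ref{thm1.4} together with Lemma \ref{thm1.2}. Your route instead encodes a labelled $(a,b)$-Dyck path by the height of the $E$-step carrying each label (this matches the paper's own convention, e.g.\ Example \ref{exm4.1}, where $(2,0,3,0,1,2,0)$ records exactly these heights), translates the sorted inequalities $c_{(i)}\le\lfloor(i-1)a/b\rfloor$ into nonnegativity of the partial sums of $am_k-b$, and finishes with the Pollak/cycle-lemma argument on $(\mathbb{Z}/a\mathbb{Z})^b$; the coprimality hypothesis enters precisely to make the minimum of the prefix-sum walk unique, so each orbit of size $a$ contains exactly one parking function. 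All steps check out (the level-set reformulation of the sorted conditions, the observation that the diagonal constraint need only be imposed at the $E$-steps, and the freeness of the diagonal $\mathbb{Z}/a\mathbb{Z}$-action are each justified correctly). What each approach buys: the bijection of \cite{ref8} yields more refined information (an explicit bijection, hence also statistics), and the paper's rational-parking-function derivation ties the count to the grammar/Gon\v{c}arov framework used throughout, whereas your argument is elementary, self-contained, and isolates exactly where $\gcd(a,b)=1$ is used --- a genuinely worthwhile alternative, correctly executed.
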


Several results have been obtained regarding q-analogue of classical parking functions. As an illustration, let $c=(c_1,...,c_n)$ represent a classical parking function, 
$$\sum_{c\in \mathcal{PF}_n}q^{Z(c)}=q(q+n)^{n-1},$$ 
where  $Z(c)$ denotes the count of ones in $c$. The aforementioned result was discovered and proven by Foata and Riordan \cite{ref15}. In 2020, Yue Cai and Catherine H.Yan posed the inquiry of extending these q-analogue results to the rational domain in Section 7 \cite{ref8}, and expressed interest in presenting them in the form of symmetric functions.
This paper concludes with a preliminary statement of relevant conclusions.

 Throughout, diverging from $\mathcal{P}_n(x)$ associated with $x$-parking functions, I designate the set of $u$-parking functions as $\mathcal{PF}_n(u)$ or $\mathcal{PF}_n(\widehat{x})$ with $\widehat{x}_i = x_1 +\cdots +x_i=u_i|_1^n$, and the set of $(a, b)$-parking functions as $(a, b)-\mathcal{PF}_b$. The standard font indicates their cardinality. Furthermore, identical elements $a_i=a_j$ with different codes in the parking function should be treated as different elements in the set, such as the cardinality of the set $\{a_r|1\leq r\leq n \} \sim a = (a_1,...,a_n) $ is $n$.
 
 Let $\textbf{J}_a=(j_1,...,j_{u_n})$ be the $specification$ of $a$, where $j_i=Card \{ a_r\in a| a_r=i\}$. The set $spec(k,b)$ is defined as the set of all the specifications of $(k,kb)$-parking functions.

\section{New results and plan of the paper}

This paper is organized as follows. Section 3 describes a context-free grammar applicable to the basic $x$-parking function. In Section 4, I derive a connection between the vector parking function and the periodic $x$-parking function, as well as a context-free grammar that conforms to them.
Building upon this grammar, Section 5 presents q-analogue for some vector parking functions.

In Section 3, I leverage a bijection between $x$-parking functions and labeled multicolored forests initially introduced in \cite{ref6}. 
Subsequently, to generate rooted labeled trees, I apply a context-free grammar to the alphabet representing the root.
Combining these two ﬁndings, the grammar of parking functions is presented.

Corollary 5.6 in \cite{ref9} suggests that $P_n(x) =k^nP_n(z)$, when the $x$-parking functions with $z=(\frac{a}{b},1,...,1)\in \mathbbm{N}^n$ and $x=b\cdot z$.      
However, $'\frac{a}{b}'$ is not always an integer for any integer $a,b$. Therefore, there is a natural motivation to extend the aforementioned corollary from integers to rational conditions. 

In Section 4, 
I delve into vector parking functions of length $kb$ and height $ka$, where $'a'$ and $'b'$ are coprime, and $k\geq 1$. It is worth noting that generalized Gon$\check{c}$arov polynomials can be used to generalize Corollary 5.6\cite{ref9} from integers to complex numbers.

\begin{thm}\label{thm1.4}
	For any $k,n\in \mathbbm{N}$, $u =(u_0,...,u_{n-1})\in \mathbbm{C} ^n$, and $v=ku$, the relation holds $PF_n(v)=k^nPF_n(u)$.
\end{thm}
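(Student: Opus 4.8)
The plan is to reduce Theorem~\ref{thm1.4} to the homogeneity of the Pitman--Stanley polynomial from Lemma~\ref{lem1.4}. First I would make the change of variables explicit. A $u$-parking function of length $n$ is exactly an $x$-parking function for the increment vector $x=(x_1,\dots,x_n)$ determined by $\widehat{x}_i = x_1+\cdots+x_i = u_{i-1}$, i.e. $x_1 = u_0$ and $x_j = u_{j-1}-u_{j-2}$ for $2\le j\le n$. Write $\Phi\colon \mathbbm{C}^n\to\mathbbm{C}^n$, $\Phi(u)=x$, for this correspondence. The two features of $\Phi$ that I need are that it is \emph{linear} and \emph{invertible} (its inverse is the partial-sum map); in particular the hypothesis $v=ku$ translates into $\Phi(v)=k\,\Phi(u)$.

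Next I would pin down what $PF_n(u)$ means when $u$ has complex entries. By Lemma~\ref{lem1.4}, for every $x\in\mathbbm{N}^n$ the number of $x$-parking functions equals
\[
Q_n(x_1,\dots,x_n)\;:=\;\sum_{(a_1,\dots,a_n)\in park(n)} x_{a_1}\cdots x_{a_n},
\]
and $Q_n$ is a polynomial that is \emph{homogeneous of degree $n$}, since each monomial $x_{a_1}\cdots x_{a_n}$ is a product of exactly $n$ (not necessarily distinct) variables. Two polynomials that agree on all of $\mathbbm{N}^n$ are identical, so $Q_n$ is the unique polynomial extension of the counting function, and the only consistent meaning of $PF_n(u)$ for $u\in\mathbbm{C}^n$ is $PF_n(u)=Q_n(\Phi(u))$. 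This is also the quantity produced by the generalized Gon\v{c}arov-polynomial description of $PF_n(u)$ alluded to above: the Gon\v{c}arov formula and $Q_n\circ\Phi$ agree at integer points, hence everywhere.

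Granting these normalizations, the theorem drops out: using $\Phi(v)=k\,\Phi(u)$ and the degree-$n$ homogeneity of $Q_n$,
\[
PF_n(v)=Q_n\bigl(\Phi(v)\bigr)=Q_n\bigl(k\,\Phi(u)\bigr)=k^n\,Q_n\bigl(\Phi(u)\bigr)=k^n\,PF_n(u).
\]

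The one step that actually requires care is the middle paragraph, namely establishing that $PF_n(u)$ for non-integral $u$ genuinely is the polynomial $Q_n\circ\Phi$ rather than some a priori different object; everything after that is just the observation that a sum of degree-$n$ monomials scales by $k^n$. If one prefers to stay inside the Gon\v{c}arov framework throughout, the same homogeneity reappears as the scaling identity $t_n(kz;\,kz_0,\dots,kz_{n-1})=k^n\,t_n(z;\,z_0,\dots,z_{n-1})$ for the generalized Gon\v{c}arov polynomials attached to the ordinary derivative, which can be read off from their recursive (or determinantal) definition and then combined with the expression of $PF_n(u)$ through the $t_n$.
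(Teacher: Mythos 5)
Your proposal is correct, and it reaches the conclusion by a route that differs from the paper's in where the homogeneity is located. The paper works entirely inside the Gon\v{c}arov framework: it takes the definition $PF_n(u)=(-1)^n t_n(0;D,u_0,\dots,u_{n-1})$ for complex nodes, expands $t_n(0;-Z)$ as a sum over ordered partitions of $[n]$ of products of the basic polynomials $p_m(x)=x^m$ evaluated at the nodes, and observes that every such product has total degree exactly $n$ in the nodes, so $t_n(0;D,-c\cdot Z)=c^n\,t_n(0;D,-Z)$, which is the claim. You instead push the homogeneity onto the Pitman--Stanley polynomial $Q_n$ of Lemma \ref{lem1.4} (each monomial $x_{a_1}\cdots x_{a_n}$ has degree $n$), transport the scaling $v=ku$ through the linear, invertible boundary-to-increment map $\Phi$, and then argue that for complex $u$ the quantity $PF_n(u)$ must be the polynomial $Q_n\circ\Phi$ because the Gon\v{c}arov expression is a polynomial in the nodes agreeing with the counting function on the integer grid, and a polynomial is determined by its values there (the grid $\Phi^{-1}(\mathbbm{N}^n)$ is Zariski dense, and $\Phi$ is linear, so this is sound). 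What your route buys is elementarity: once the identification is made, the scaling is just ``a sum of degree-$n$ monomials scales by $k^n$,'' with no need for the ordered-partition expansion from the Lorentz--Tringali--Yan machinery. What the paper's route buys is that it never needs the uniqueness-of-polynomial-extension step, since it manipulates the very expression that defines $PF_n(u)$ for complex $u$; note also that your identification step still rests on the same external input (the Kung--Yan/Gon\v{c}arov description of $PF_n(u)$ and its polynomiality in the nodes), so the two proofs consume essentially the same ingredients, just ordered differently. Your closing remark about the scaling identity $t_n(kz;kz_0,\dots,kz_{n-1})=k^n t_n(z;z_0,\dots,z_{n-1})$ is in substance the paper's argument specialized at $z=0$, so you have in effect sketched both proofs.
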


Consequently, as a consequence of Theorem \ref{thm1.4} and Lemma \ref{thm1.2}, it can be deduced that $(a,b)-PF_b = a^{b-1}$ through fractional $x$-parking function. Moreover, to demonstrate that the periodic $x$-parking function and vector parking function are equal in the cardinality, when both sides of a Dyck path are not coprime ($k>1$), I need to present an abelian identity with a grammar similar to William Y.C. Chen and Harold R.L. Yang\cite{ref4}.
\begin{thm}\label{thm3.1}
	For $n\geq 1$ and $k\geq 1$,
	
	$$\left( \sum_{i=1}^{k} x_i\right)\cdot \left( n + \sum_{i=1}^{k} x_i\right)^{n-1}=\sum_{i_1+\cdots +i_k=n} \binom{n}{i_1,...,i_k}\prod_{j=1}^{k}x_j\cdot (x_j +i_j)^{i_j-1} $$
\end{thm}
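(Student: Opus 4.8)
The plan is to prove the identity via the context-free grammar machinery, mirroring the approach of Chen and Yang. I would introduce a grammar on an alphabet $\{t, x_1, \dots, x_k\}$ (or a single pair $\{x, y\}$ enriched to track the separate colors) whose formal derivative $D$ has the property that $D^n$ applied to a suitable seed monomial produces, on one side, the left-hand expression $\bigl(\sum_i x_i\bigr)\bigl(n + \sum_i x_i\bigr)^{n-1}$, and on the other side, when expanded combinatorially over the $k$ colors, the multinomial sum on the right. Concretely, I expect the grammar to look like $x_j \to x_j t$ for each $j$ and $t \to t\,(x_1 + \cdots + x_k)$ — or a close variant — so that iterating $D$ counts labeled multicolored forests in which each tree has a root of one of the $k$ colors, with $i_j$ vertices of color $j$.

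The key steps, in order, would be: (i) fix the grammar and verify by a short induction that $D^n$ of the seed equals $\bigl(\sum_i x_i\bigr)\bigl(n+\sum_i x_i\bigr)^{n-1}$ times the seed — this is the standard Abel-type computation and should follow the pattern already used for grammar $H$ in \eqref{equ1.3}; (ii) independently expand $D^n$ of the seed by Leibniz's rule, splitting the $n$ derivative applications according to how many, $i_j$, act within the color-$j$ block, which introduces the multinomial coefficient $\binom{n}{i_1,\dots,i_k}$; (iii) recognize each single-color block as generating the classical Abel polynomial, so that the color-$j$ factor evaluates to $x_j(x_j + i_j)^{i_j - 1}$; (iv) equate the two evaluations of $D^n(\text{seed})$ and cancel the common seed monomial to obtain the stated identity. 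Step (iii) is essentially the $k=1$ case of the identity, which is the classical Abel identity $x(x+n)^{n-1} = \sum \binom{n}{i} x (x+i)^{i-1} \cdot (\text{rest})$ and can be quoted or proved by the same one-variable grammar.

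The main obstacle will be step (ii): arranging the bookkeeping so that the cross-terms from the product rule factor cleanly as a product over colors. The subtlety is that the derivative $t \to t(x_1 + \cdots + x_k)$ couples all the colors through the single variable $t$, so a naive expansion does not visibly separate. I would handle this either by a change of variables that decouples $t$ into $t_1 \cdots t_k$ contributions, or — more likely, following Chen–Yang — by tracking the action of $D$ on a product $t \cdot \prod_j x_j$ and using an auxiliary lemma that the "number of applications landing in block $j$" is governed by an independent sub-grammar, so that the full count is the multinomial convolution of the one-color counts. Once that decoupling lemma is in place, the rest is routine: the left side is the $k=1$ evaluation applied to the merged variable $\sum_i x_i$, and the right side is its multinomial deconvolution.
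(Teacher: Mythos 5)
Your overall strategy---evaluate $D^{n}$ of a seed in two ways, obtaining the merged-variable expression $\left(\sum_i x_i\right)\left(n+\sum_i x_i\right)^{n-1}$ on one side and a Leibniz/multinomial expansion into one-color Abel factors $x_j(x_j+i_j)^{i_j-1}$ on the other---is exactly the paper's strategy. But the step you yourself single out as the main obstacle is precisely where the proposal is not yet a proof, and the grammar you write down cannot get past it. With $x_j\rightarrow x_j t$, $t\rightarrow t(x_1+\cdots+x_k)$, the shared letter $t$ couples the colors: derivative applications landing on $t$ (or on letters it produces) cannot be attributed to any single color block, so the seed is not a product of $k$ color-separated factors, the higher Leibniz rule gives no clean multinomial over colors, and no ``independent sub-grammar per block'' lemma is available for this grammar. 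Moreover, as stated it does not even reproduce the left-hand side: writing $X=x_1+\cdots+x_k$, one gets $D^2(t)\vert_{t=1}=X(X+1)$ rather than $X(X+2)$, because nothing in your alphabet plays the role of a $w$-type letter contributing the ``$n$'' in $(X+n)^{n-1}$. So the difficulty is not bookkeeping to be fixed by a lemma; it is the design of the alphabet itself.

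The missing idea, which is the whole content of the paper's proof, is to give each color its own seed letter and its own $x$-letter while sharing only the auxiliary letters: the paper uses $H'\colon z_i\rightarrow z_i x_i y,\ x_i\rightarrow x_i y w,\ y\rightarrow y^3 w,\ w\rightarrow y w^2$. Then (a) the seed $z_1\cdots z_k$ is a genuine product of $k$ factors, so for the derivation $D$ one has $D^{n}(z_1\cdots z_k)=\sum_{i_1+\cdots+i_k=n}\binom{n}{i_1,\dots,i_k}\prod_{j}D^{i_j}(z_j)$, and each factor $D^{i_j}(z_j)$ involves only $z_j,x_j,y,w$, hence evaluates at $z_j=y=w=1$ to $x_j(x_j+i_j)^{i_j-1}$ by the one-color evaluation (\ref{equ4.1}) of Lemma \ref{thm1.2} with $b=1$; and (b) the combinations $Z=z_1\cdots z_k$, $X=x_1+\cdots+x_k$, together with $y,w$, satisfy exactly the rules of the one-color grammar $H$ in (\ref{equ1.3}), namely $D(Z)=ZXy$ and $D(X)=Xyw$, so $D^{n}(Z)\vert_{Z=y=w=1}=X(X+n)^{n-1}$, which is the left-hand side. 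Your sketch gestures at this (``decouple $t$ into $t_1\cdots t_k$'', ``independent sub-grammar per block'') but does not supply it; once you replace your grammar by $H'$ and take the seed $z_1\cdots z_k$, your steps (i)--(iv) go through essentially as in the paper.
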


Moving forward, Theorem \ref{thm1.5} is subsequently obtained.
\begin{thm}\label{thm1.5}
	A periodic $x$-parking function consisting of $k$-cycles can be decomposed into $k$ blocks, each conforming to the structure of the basic $x$-parking function in Lemma \ref{thm1.2}. 
	For $x=(1, \frac{a-1}{b},\mathop{...}\limits^{(b-3)},\frac{a-1}{b},\frac{a+b-1}{b},\frac{a-1}{b},\mathop{...}\limits^{(b-3)},\frac{a-1}{b},\frac{a+b-1}{b},...)$, I obtain:
	$$P_{kb}(x)= (a,b)-PF_{kb}=\sum_{J\in spec(k,b)	} \binom{kb}{j_1,...,j_k} \prod_{i=1}^{k} \left(1+j_i\frac{a-1}{b}\right)^{j_i-1}, $$
	where $spec(k,b)=\{J=(j_1, ..., j_k)|j_1\geq b,..., j_1+\cdots+j_t \geq tb, ...,j_1+\cdots + j_k = kb,j_i\geq 0\}$. 
\end{thm}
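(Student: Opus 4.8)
The plan is to prove Theorem \ref{thm1.5} in two stages: first establish the structural decomposition of a periodic $x$-parking function of length $kb$ into $k$ consecutive blocks, and then count each block via the basic $x$-parking function formula of Lemma \ref{thm1.2} combined with the abelian identity of Theorem \ref{thm3.1}. For the structural part, I would examine the defining inequalities of a $u$-parking function with the specified periodic vector $x$ (equivalently $\widehat{x}$ with $\widehat{x}_i = x_1+\cdots+x_i$). Writing out $\widehat{x}$, one sees that over each period of length $b$ the partial sums jump from roughly $1 + (m-1)\frac{a-1}{b} + (\text{correction})$ so that $\widehat{x}_{mb} = 1 + m(a-1) + (m-1)$ collapses to the integer value $ma - (m-1) + \cdots$; the key point is that $\widehat{x}_{tb}$ is an integer for each $t$ and equals the cumulative ``height'' $ta$ shifted appropriately, which is exactly what makes the cut points at multiples of $b$ natural. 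The condition $a_{(i)} \le \widehat{x}_i$ then forces, for each $t$, that at least $tb$ of the entries lie in $\{1, \ldots, tb\}$ — i.e. the specification $J = (j_1, \ldots, j_k)$ (here re-grouped so that $j_i$ counts entries landing in the $i$-th block of $b$ consecutive values) must satisfy $j_1 + \cdots + j_t \ge tb$ for all $t$ and $j_1 + \cdots + j_k = kb$. This is precisely the set $spec(k,b)$.

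Once the admissible specifications are identified, I would count parking functions with a \emph{fixed} specification $J$. Choosing which of the $kb$ positions carry block-$i$ values is a multinomial coefficient $\binom{kb}{j_1,\ldots,j_k}$. Within block $i$, the $j_i$ chosen entries must themselves form a parking-function-like configuration relative to the local vector, which (after the period structure is unwound) is again a basic $x$-parking function of length $j_i$ with leading parameter $\alpha = 1$ and common value $\beta = \frac{a-1}{b}$ — here I use that $x = (1, \frac{a-1}{b}, \ldots, \frac{a-1}{b}, \ldots)$ restricted to a block is of the form $(\alpha, \beta, \ldots, \beta)$. By Lemma \ref{thm1.2}, the number of such configurations is $\alpha(\alpha + j_i\beta)^{j_i - 1} = \left(1 + j_i\frac{a-1}{b}\right)^{j_i-1}$. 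Multiplying over the $k$ blocks and summing over $J \in spec(k,b)$ yields the stated formula for $P_{kb}(x)$. The identification $P_{kb}(x) = (a,b)\text{-}PF_{kb}$ then follows from Theorem \ref{thm1.4} together with Lemma \ref{thm1.5} (applied with the scaling $k$) — equivalently from the fractional $x$-parking function correspondence: $v = k u$ with $u$ the length-$kb$ vector whose partial sums realize the $(a,b)$-Dyck path slope, so that $PF_{kb}(v) = k^{kb} PF_{kb}(u)$ matches the periodic count.

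The main obstacle I anticipate is the \emph{decomposition step itself} — rigorously justifying that every periodic $x$-parking function splits \emph{uniquely} into $k$ blocks each of which is an independent basic $x$-parking function, rather than merely bounding counts. The subtlety is that the inequalities $a_{(i)} \le \widehat{x}_i$ couple entries across block boundaries, and one must show that the ``greedy'' assignment of entries to blocks (by which block of $b$ consecutive integer values each entry falls into) both respects the within-block parking condition and forces exactly the $spec(k,b)$ constraints — with no overcounting or undercounting. I would handle this by an induction on $k$: peel off the first block (entries with values in $\{1,\ldots, b\} \cup \{$local range$\}$), verify via the partial-sum inequalities that these entries in non-decreasing order satisfy the basic parking bound with $(\alpha,\beta) = (1, \frac{a-1}{b})$, and check that the remaining $kb - j_1$ entries, after subtracting the block offset, constitute a periodic $x$-parking function of length $(k-1)b$; the inductive hypothesis then closes the argument. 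The abelian identity Theorem \ref{thm3.1} is what guarantees, at the level of \emph{sums}, that this block decomposition is consistent with the total count $P_{kb}(x) = \alpha(\alpha + kb\,\beta)^{kb-1}$-type closed form when it exists, serving as a consistency check and as the algebraic engine that collapses the multinomial sum; I expect invoking it in the $a = b+1$ (or $\beta = 1$) specialization to be the cleanest route to pin down the constants.
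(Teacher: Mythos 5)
Your first stage---the block decomposition and the per-block count---is essentially the paper's route to its equation (4.2): clear denominators via Theorem \ref{thm1.4} (multiply $x$ by $b$), cut the sequence greedily into $k$ maximal blocks, and count each block as a basic $x$-parking function with parameters $(b,a-1,\dots,a-1)$ via Lemma \ref{thm1.2}, giving $\sum_{J\in spec(k,b)}\binom{kb}{j_1,\dots,j_k}\prod_i\bigl(1+j_i\tfrac{a-1}{b}\bigr)^{j_i-1}$. One caveat: the blocks are not ``the $i$-th block of $b$ consecutive values''; the cut points are adaptive, depending on $j_1,\dots,j_{i-1}$ (the $i$-th block collects the entries between $ib+(j_1+\cdots+j_i)(a-1)$-type thresholds), which is exactly why the admissible length vectors are $spec(k,b)$ rather than a fixed partition of the value range. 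Your induction-on-$k$ peeling sketch is in the right spirit and could be made to match the paper's argument.

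The genuine gap is the second equality, $P_{kb}(x)=(a,b)\text{-}PF_{kb}$. You propose to get it from Theorem \ref{thm1.4} together with Lemma \ref{lem1.5} ``applied with the scaling $k$,'' but those tools only cover the coprime case $k=1$: Lemma \ref{lem1.5} counts $(a,b)$-parking functions of length $b$ with $\gcd(a,b)=1$, and Theorem \ref{thm1.4} rescales every entry of the bound vector by the same constant, whereas the $(ka,kb)$-parking functions of length $kb$ have bound vector $u_i=1+\lfloor ia/b\rfloor$, $0\le i\le kb-1$, which is not a constant multiple of the periodic vector $\widehat{x}$ (the two vectors already differ entrywise for $k=1$, and for $k>1$ there is no product formula like $a^{kb-1}$ to fall back on). This identification is the hard core of the theorem, and the paper proves it by a separate mechanism: a circle-partition argument organized by the Fa\`a di Bruno grammar (tiny circles corresponding to $spec(i,b)$ via the cycle-lemma-type result in Section 3 of Cai--Yan), simplification of the resulting sum using Corollary \ref{cor3} (the multivariable Abel identity of Theorem \ref{thm3.1} with $x=ka$, $n=kb$), and finally comparison of coefficients with the exponential generating function $\exp\bigl(\sum_{k\ge1}\tfrac{(ka)^{kb-1}}{(kb)!}z^k\bigr)$ from Cai--Yan's Corollary 2.9 and Theorem 3.2. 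In your plan the Abel identity is demoted to a ``consistency check'' and the non-coprime count is never actually connected to the block-decomposition sum, so the claimed equality with $(a,b)\text{-}PF_{kb}$ remains unproved; invoking the $a=b+1$ specialization would not repair this, since the statement is needed for all coprime $(a,b)$.
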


Finally, in Section 5, the main result of q-analogue is obtained by the construction of Proposition \ref{prop4.7} and Lemma \ref{thm1.6}.\ref{prop4.7}

\begin{thm}\label{thm2.4}
	If $a \equiv 1 \; (mod\; b)$, let $\mathcal{PF}_{db}(u)$ denote the set of $(da,db)$-parking functions with $u=(1+\lfloor i \frac{a}{b}  \rfloor)\vert_{i=0}^{db-1} $. The corresponding q-analogue are given by:

	$$ \sum_{c\in \mathcal{PF}_{db}(u)} q^{Z(c)}=  \sum_{J\in spec(d,b)} \binom{db}{j_1,...,j_d} q\left(q+j_1\cdot \frac{a-1}{b}\right)^{j_1-1} \prod_{t=2}^{d}\left(1+j_t\cdot \frac{a-1}{b}\right)^{j_t-1}.
	$$
	
\end{thm}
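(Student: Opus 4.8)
The plan is to bootstrap Theorem \ref{thm2.4} from the $q=1$ specialization, which is exactly the enumerative identity of Theorem \ref{thm1.5} (rewritten with $u_i = 1+\lfloor i\frac{a}{b}\rfloor$ and $a\equiv 1 \pmod b$, so that $\frac{a-1}{b}$ is an integer and $u$ is a genuine periodic $x$-parking vector with $k=d$ cycles). The statistic $Z(c)$ counts the number of $1$'s in $c$, so the first step is to track where the $1$'s live: since $u_0 = 1$, the coordinates equal to $1$ are precisely those parked in the first slot, and under the block decomposition of Theorem \ref{thm1.5} these all fall inside the \emph{first block}, which is a basic $x$-parking function of length $j_1$ on the vector $(1,\frac{a-1}{b},\dots,\frac{a-1}{b})$. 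So I would first reduce the claim to: for a basic $x$-parking function of length $m$ with $x=(\alpha,\beta,\dots,\beta)$, the $q$-analogue tracking the number of $1$'s is $\alpha\, q\,(q+m\beta)^{m-1}$ when $\alpha=1$ — more precisely $q(q+(m)\beta)^{m-1}$ with the leading $\alpha$ replaced by $q$. This is the Foata–Riordan-type refinement and is the engine of the whole theorem.

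Next I would establish that basic refinement grammatically. Recall from Section 3 that the basic $x$-parking function of length $m$ is encoded by a context-free grammar (the root-alphabet grammar applied on top of the multicolored-forest bijection of \cite{ref6}), and $D^m$ applied to the initial letter produces $P_m(x)=\alpha(\alpha+m\beta)^{m-1}$. The key observation is that the letter recording the root carries the weight $\alpha$, and every coordinate equal to $1$ in the parking function corresponds (under the bijection) to a child of the root / an edge incident to the root in the associated multicolored tree. Therefore, replacing the root-weight letter by a fresh indeterminate $q$ and leaving the rest of the grammar untouched yields a derivative $D$ whose $m$-fold application computes $\sum_c q^{Z(c)}$ directly; the same computation that gave $\alpha(\alpha+m\beta)^{m-1}$ now gives $q(q+m\beta)^{m-1}$ verbatim, because $q$ only ever appears linearly (the root is never "re-rooted"). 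I would spell this out as the promised Proposition \ref{prop4.7} / Lemma \ref{thm1.6} input, checking the base cases $m=1,2$ by hand and then invoking the grammatical induction already set up for Theorem \ref{thm3.1}.

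Then I assemble the pieces. By Theorem \ref{thm1.5}, $\mathcal{PF}_{db}(u)$ decomposes over specifications $J=(j_1,\dots,j_d)\in spec(d,b)$, the multinomial $\binom{db}{j_1,\dots,j_d}$ counting the ways to distribute the $db$ labels among the $d$ blocks, block $t$ being a basic $x$-parking function of length $j_t$ on $(\,\cdot\,,\frac{a-1}{b},\dots,\frac{a-1}{b})$ with first entry $1$. Since all $1$'s of $c$ sit in block $1$, we have $Z(c) = Z(c^{(1)})$ where $c^{(1)}$ is the first block, and the generating function factors as
\[
\sum_{c\in\mathcal{PF}_{db}(u)} q^{Z(c)} = \sum_{J\in spec(d,b)} \binom{db}{j_1,\dots,j_d}\left(\sum_{c^{(1)}} q^{Z(c^{(1)})}\right)\prod_{t=2}^{d}\#\{\text{block }t\}.
\]
Applying the basic refinement to the first factor gives $q\bigl(q+j_1\frac{a-1}{b}\bigr)^{j_1-1}$, and Lemma \ref{thm1.2} gives $\bigl(1+j_t\frac{a-1}{b}\bigr)^{j_t-1}$ for each $t\ge 2$ (here $\alpha=1$), which is exactly the claimed formula.

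The main obstacle is the second step: making rigorous the claim that under the multicolored-forest bijection the number of $1$'s equals the root-degree statistic, and that the root weight enters the grammatical computation strictly linearly, so that the substitution $\alpha\mapsto q$ is legitimate and commutes with $D^m$. One must verify that the grammar's substitution rules never create a second copy of the root letter nor feed it back into a rule that would raise its exponent — equivalently, that in the forest picture the recursion peels off subtrees hanging from the root without ever nesting the root. If the bijection of \cite{ref6} instead distributes the $1$'s across the whole tree in a less transparent way, I would fall back to a direct sign-reversing / transfer argument on Dyck paths: among $(da,db)$-parking functions the E-steps labeled $0$ are the "$1$'s", and one tracks them through the cycle-lemma decomposition underlying Theorem \ref{thm3.1}, which localizes them to the first $(a,b)$-block.
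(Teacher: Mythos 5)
Your proposal follows essentially the same route as the paper: Lemma \ref{lem5.1} reduces to the vector $(1,\tfrac{a-1}{b},\dots,\tfrac{a-1}{b})$, the block decomposition of Theorem \ref{thm1.5}/Proposition \ref{prop4.7} localizes all the $1$'s to the first block, your ``basic refinement'' is precisely the paper's Lemma \ref{lem5.2} (proved, as you suggest, via the multicolored-forest bijection by weighting the root-children letter with $q$), and the final assembly is the grammar of Lemma \ref{thm1.6} evaluated at $x_1=q$, $x_t=1$ for $t\geq 2$, $w_i=\tfrac{a-1}{b}$. One small repair to your justification: the letter carrying $q$ is $x$ (attached to each child of the root), not the root letter $z$, so $q$ appears to the power $\deg_T(0)=Z(c)$ rather than linearly; the substitution is nonetheless legitimate because it is simply the polynomial evaluation of $\sum_k p_k(m)\,x^k w^{m-k}=x(x+mw)^{m-1}$ at $x=q$, $w=\tfrac{a-1}{b}$, exactly as in Lemma \ref{thm1.2}.
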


\section{A grammar for $x$-parking functions}

In 2000, C.H.Yan introduced a bijection between $x$-parking functions and multicolored labeled forests.\cite{ref16} An unique labeled tree $T$ on $[n] \cup \{ 0\}$ is associated with the labeled forest by appending a root $'0'$ to a forest and then distributing different colored edges to $S_1, S_2,...S_k$. 
Subsequently, I will demonstrate that such trees $T$ on $[n] \cup \{ 0\}$ can be described by a context-free grammar.

Motivated by the four operations listed after Theorem 2.1 in \cite{ref4}, and applying them to $S$, I present Lemma \ref{thm1.1} as follows.

\begin{lem}\label{thm1.1}
	Let $D$ denote the formal derivative with respect to the  grammar $G$ defined in (\ref{equ1.1}) and $T(n,k)$ be the number of labeled trees with the size of $A^{n+k-1}S^n$. 
	Then for $n>0$, I have  
	$$D^n(S)= \sum_{k=0}^{n} T(n+1,k)A^{n+k}S^{n+1}.$$
	Specially, by setting $A=S=1$ above, I obtain 
	$D^n(S)|_{A=S=1}=(n+1)^{n-1}$,
	implying the context-free grammar $G$ of classical parking functions.
	
\end{lem}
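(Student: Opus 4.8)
The plan is to prove the two identities for $D^n(S)$ and its specialization by induction on $n$, using the product and chain rules from Definition~\ref{defn1.1} together with the substitution rules $A\to A^3S$, $S\to AS^2$ of the grammar $G$ in~(\ref{equ1.1}). First I would set up notation: write $D^n(S)=\sum_{k\ge 0} c(n,k)\,A^{n+k}S^{n+1}$ as an ansatz (the exponents $n+k$ of $A$ and $n+1$ of $S$ being forced, as one checks directly from the base case $D(S)=AS^2$, which gives $c(1,0)=1$), and then derive a recurrence for the coefficients $c(n,k)$ by applying $D$ once more. Concretely, $D\bigl(A^{n+k}S^{n+1}\bigr) = (n+k)A^{n+k-1}(A^3S)S^{n+1} + (n+1)A^{n+k}S^n(AS^2) = (n+k)A^{n+k+2}S^{n+2} + (n+1)A^{n+k+1}S^{n+2}$. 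Re-indexing so that the $A$-exponent matches the form $A^{(n+1)+k'}S^{(n+1)+1}=A^{n+2+k'}S^{n+2}$, the first term contributes to $k'=k+1$ and the second to $k'=k$, yielding the recurrence $c(n+1,k') = (n+k')\,c(n,k'-1) + (n+1)\,c(n,k')$.

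Next I would identify $c(n,k)$ with $T(n+1,k)$, the number of labeled trees on $[n+1]\cup\{0\}$ (equivalently rooted forests on $[n+1]$) whose monomial under the tree-generating interpretation of $G$ is $A^{n+k}S^{n+1}$ — that is, trees with a prescribed refinement of vertices/edges recorded by the exponent of $A$. This is exactly the combinatorial content attached to the Dumont–Ramamonjisoa grammar, so one either cites that $T(n+1,k)$ satisfies the same recurrence and the same initial condition $T(2,0)=1$, or one gives a short direct bijective/deletion argument: removing the largest leaf (or the appropriate edge) from such a tree decreases $n$ by one and either keeps or lowers the auxiliary statistic by one, matching the two terms $(n+1)\,c(n,k')$ and $(n+k')\,c(n,k'-1)$. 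Either way, uniqueness of the solution to the recurrence with the given initial value forces $c(n,k)=T(n+1,k)$, which is the first displayed formula.

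Finally, for the specialization I would set $A=S=1$. Since the grammar operator $D$ commutes with this evaluation in the sense that $D^n(S)\big|_{A=S=1} = \sum_{k} T(n+1,k)$, it suffices to evaluate $\sum_{k=0}^{n} T(n+1,k)$. By the bijection of Schützenberger (cited in the introduction) the total number of rooted labeled forests on $[n+1]$, summed over the auxiliary statistic, equals the number of labeled trees on $[n+1]\cup\{0\}$ on $n+2$ vertices rooted at $0$, which is $(n+1)^{n-1}$ by Cayley's formula; alternatively this total count of classical parking functions of length $n$ is $(n+1)^{n-1}$ by Lemma~\ref{lem1.4} with $x=(1,\dots,1)$. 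Hence $D^n(S)\big|_{A=S=1}=(n+1)^{n-1}$, establishing that $G$ is a context-free grammar for classical parking functions. The main obstacle I anticipate is not the induction itself — that is a routine application of the two rules — but pinning down precisely what the exponent of $A$ records combinatorially, so that the identification $c(n,k)=T(n+1,k)$ is justified rather than merely asserted; getting the re-indexing in the recurrence exactly right (which term shifts $k$ and which does not) is the place where sign/index errors are most likely to creep in.
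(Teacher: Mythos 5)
Your overall route (induct on $n$ to get the shape $\sum_k c(n,k)A^{n+k}S^{n+1}$, identify $c(n,k)$ with tree numbers, then set $A=S=1$) is the natural one; note that the paper itself gives no proof of this lemma at all — it is presented as following from the Dumont--Ramamonjisoa grammar and the four operations after Theorem 2.1 of Chen--Yang — so a self-contained induction would actually be an improvement. But as written your argument contains concrete errors. First, the recurrence is off by one: from $D(A^{n+k}S^{n+1})=(n+k)A^{n+k+2}S^{n+2}+(n+1)A^{n+k+1}S^{n+2}$ and the target form $A^{(n+1)+k'}S^{n+2}$ (which is $A^{n+1+k'}S^{n+2}$, not $A^{n+2+k'}S^{n+2}$ as you wrote), the first term lands in $k'=k+1$ with coefficient $(n+k)=(n+k'-1)$, so the correct recurrence is $c(n+1,k')=(n+k'-1)\,c(n,k'-1)+(n+1)\,c(n,k')$. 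Your version $(n+k')\,c(n,k'-1)+(n+1)\,c(n,k')$ already fails at $n=1$: since $D^2(S)=A^3S^3+2A^2S^3$ one has $c(2,1)=1$, while your recurrence gives $2$; at the next step it would give $9$ instead of the true $c(3,1)=7$ (from $D^3(S)=3A^5S^4+7A^4S^4+6A^3S^4$). Second, the final count is wrong as stated: labeled trees on $[n+1]\cup\{0\}$, i.e.\ on $n+2$ vertices rooted at $0$, number $(n+2)^{n}$ by Cayley, not $(n+1)^{n-1}$. The object you need is trees on $[n]\cup\{0\}$ ($n+1$ vertices) rooted at $0$, equivalently rooted forests on $[n]$, equivalently classical parking functions of length $n$; these do number $(n+1)^{n-1}$.

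Third, and most substantively, the identification $c(n,k)=T(n+1,k)$ — which is the entire content of the first display — is only asserted. You never pin down which tree statistic the exponent of $A$ records; it is the number of improper edges (the Ramanujan-polynomial refinement used by Dumont--Ramamonjisoa and Chen--Yang, and the same statistic the paper invokes in the proof of Lemma \ref{thm1.2} via the weight $y^{n+k}$). Without fixing that statistic, neither the claim that $T(n+1,k)$ "satisfies the same recurrence" nor the sketched leaf-deletion argument can be checked, since $T(n,k)$ is only meaningful once the weighting of trees is specified. Citing Dumont--Ramamonjisoa or Chen--Yang for this step is legitimate — that is all the paper does — but then the inductive recurrence machinery does no work; if you want the self-contained version, you must (i) state the weight of a tree on $[n]\cup\{0\}$ rooted at $0$ with $k$ improper edges as $A^{n+k}S^{n+1}$, (ii) verify that inserting the largest vertex produces exactly the two terms of the corrected recurrence, and (iii) only then sum over $k$ and invoke the Cayley/forest/parking-function count of $(n+1)^{n-1}$.
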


Continuing from the ideas presented in the bijection\cite{ref3}, I aim to establish the connections between generalized parking functions and grammars, and to provide q-analogue of them. 
In Lemma \ref{thm1.2}, it will be shown that the total number of $x$-parking functions with the type $a^kb^{n-k}$ is equal to the number of labeled trees with $k$-children adjacent to the root. 

\begin{lem}\label{thm1.2}
	Let $D$ be the formal derivative associated with $H$ (\ref{equ1.3}). Setting $y=1$ in $D^n(z)$ for $n\geq 1$, I have
	$$D^n(z)|_{y=1} = \sum_{k=1}^{n}p_k(n)x^kw^{n-k}z.$$
	Moreover, let $X = (a,b,...,b)\in \mathbbm{N}^n$, then, 
	\begin{equation}\label{equ4.1}
	D^n(z)|_{y=z=1,x=a,w=b} = a(a+bn)^{n-1}.
	\end{equation}
	
	This establishes an exact correspondence between the context-free grammar and the quantitative representation of $X$-parking functions. 
\end{lem}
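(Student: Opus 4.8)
The plan is to establish the grammatical identity
$D^n(z)|_{y=1} = \sum_{k=1}^{n} p_k(n)\, x^k w^{n-k} z$
by induction on $n$, and then to read off the closed form \eqref{equ4.1} by a suitable specialization together with the Abel-type identity (Theorem \ref{thm3.1} with $k$ replaced appropriately, or its classical precursor). First I would record how $D$ acts on each letter under $H$: $D(z)=zxy$, $D(x)=xyw$, $D(y)=y^3w$, $D(w)=yw^2$. The base case $n=1$ gives $D(z)=zxy$, so $D(z)|_{y=1}=xz$, matching $p_1(1)=1$. For the inductive step I would apply $D$ to $\sum_k p_k(n) x^k w^{n-k} z$ using the Leibniz rule from Definition \ref{defn1.1}, differentiating the three active letters $x$, $w$, $z$ (the factors of $y$ only matter through the powers they spawn, which collapse upon setting $y=1$). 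The key bookkeeping point is that setting $y=1$ \emph{after} differentiating is consistent: although $D$ and the substitution $y=1$ do not commute in general, every monomial appearing in $D^{n}(z)$ is of the form $x^a w^b y^c z$, and one checks that the power of $y$ never affects which $x,w$-monomial a term lands in, so the recursion for the $p_k(n)$ closes purely in terms of $k$ and $n$.

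Carrying out the Leibniz expansion, the term $x^k w^{n-k} z$ contributes (after $y=1$): from $D(z)=zxy \mapsto zx$ a term $x^{k+1} w^{n-k} z$; from $D(x^k)=k x^{k-1}\cdot xyw \mapsto k x^k w$ a term $k\, x^k w^{n-k+1} z$; from $D(w^{n-k}) = (n-k) w^{n-k-1}\cdot yw^2 \mapsto (n-k) w^{n-k+1}$ a term $(n-k)\, x^k w^{n-k+1} z$. Collecting coefficients of $x^k w^{n+1-k} z$ in $D^{n+1}(z)|_{y=1}$ yields the recurrence
$p_k(n+1) = p_{k-1}(n) + n\, p_k(n)$,
with $p_k(n)=0$ for $k\le 0$ or $k>n$ and $p_1(1)=1$. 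This recurrence (which is essentially that of the Stirling-type / tree numbers counting rooted labeled trees by number of root-children) shows $p_k(n)$ is well-defined and establishes the first displayed formula by induction.

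For the evaluation \eqref{equ4.1}, I would set $y=z=1$, $x=a$, $w=b$ in the identity just proved, so that $D^n(z)|_{y=z=1,x=a,w=b} = \sum_{k=1}^n p_k(n)\, a^k b^{n-k}$. It then remains to identify this sum with $a(a+bn)^{n-1}$. One route: verify from the recurrence $p_k(n+1)=p_{k-1}(n)+n p_k(n)$ that the generating polynomial $Q_n(a):=\sum_k p_k(n) a^k b^{n-k}$ satisfies $Q_{n+1} = a\,Q_n + b\, n\, Q_n$ in the relevant sense — more precisely, track the substitution carefully and match against the known expansion $a(a+bn)^{n-1} = \sum_{k}\binom{n-1}{k-1} n^{?}\cdots$; the clean way is to invoke the combinatorial identity $a(a+bn)^{n-1} = \sum_{k=1}^{n} \binom{n-1}{k-1}(n)_{\,n-k}\, a^k b^{n-k}$-type Abel expansion, equivalently to note that $p_k(n)$ counts labeled rooted trees on $[n]\cup\{0\}$ with $k$ children at the root (as asserted in the text preceding the lemma), whose total with the weighting $a^k b^{n-k}$ is the number of $X$-parking functions $P_n(X)=a(a+bn)^{n-1}$ by Lemma \ref{thm1.2}'s preamble and Lemma \ref{lem1.4}. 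I expect the main obstacle to be precisely this last identification — making rigorous that the grammar-generated coefficients $p_k(n)$ coincide with the tree-by-root-degree counts, rather than merely satisfying the same recurrence; I would handle it either by exhibiting the bijection between monomials of $D^n(z)$ and the labeled multicolored forests of \cite{ref6} directly, or by checking the recurrence $p_k(n+1)=p_{k-1}(n)+np_k(n)$ is also satisfied by those tree numbers (removing the largest leaf, or the vertex $n$, gives the $n p_k(n)$ term when $n$ is attached to a non-root vertex, and the $p_{k-1}(n)$ term when $n$ is attached to the root), together with matching initial conditions, and then sum using the Abel identity.
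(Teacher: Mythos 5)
There is a genuine gap in your inductive step: in the Leibniz expansion of $D$ applied to a monomial of $D^{n}(z)$ you only differentiate $z$, $x^k$ and $w^{n-k}$, but $y$ is \emph{not} a constant of the grammar $H$ ($y\rightarrow y^3w$), so the factor $y^c$ carried by each monomial also gets differentiated and contributes $c\,y^{c+2}w$, i.e.\ an extra $w$-term whose coefficient is the $y$-exponent $c$. Your claim that ``the power of $y$ never affects which $x,w$-monomial a term lands in, so the recursion closes purely in $k$ and $n$'' fails precisely here: the landing monomial is unaffected, but the \emph{coefficient} $c$ is not, and $c$ varies among monomials with the same $x,w$-degrees (in the tree model of \cite{ref4} it records the number of improper edges). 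Concretely, $D^2(z)=zx^2y^2+zxwy^2+zxwy^3$, so $p_1(2)=2$, $p_2(2)=1$, whereas your recurrence $p_k(n+1)=p_{k-1}(n)+n\,p_k(n)$ gives $p_1(2)=1$; moreover it forces $\sum_k p_k(n)=n!$, while the correct total is $(n+1)^{n-1}$. So the recursion does not close in the numbers $p_k(n)$ alone; you would have to track a two-parameter array $q_{k,c}(n)$ (coefficient of $x^kw^{n-k}y^cz$), at which point the induction becomes essentially a reproof of the weighted-tree expansion of $D^n(z)$.

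The paper avoids this by taking that expansion as the starting point: following \cite{ref4}, $D^n(z)=\sum_T\omega_n(T)$ summed over rooted labeled trees on $[n]\cup\{0\}$, with $z$ marking the root, $x$ each child of the root, $w$ the remaining vertices, and $y$ recording proper/improper edges. Setting $y=1$ then identifies $p_k(n)$ as the number of trees whose root has $k$ children, i.e.\ the number of planted forests of $k$ trees on $[n]$, $p_k(n)=\binom{n-1}{k-1}n^{n-k}$, and \eqref{equ4.1} follows by comparing with the plain binomial expansion $a(a+bn)^{n-1}=a\sum_{k}\binom{n-1}{k}a^k(nb)^{n-1-k}$ (no Abel identity is needed here), followed by the multicolored-forest bijection of \cite{ref6}. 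Your second half is also partly circular as written (you appeal to ``Lemma \ref{thm1.2}'s preamble'' to identify the sum with $P_n(X)$), but the fix is the same: either import the tree interpretation of $D^n(z)$ from \cite{ref4}, or redo your induction with the $y$-exponent included; as it stands the recurrence, and hence the proof, is incorrect.
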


\begin{proof}[Proof of Lemma \ref{thm1.2}.]
	I will now demonstrate the connection along with a combinatorial interpretation of the coefficients. Recall that I use $F_n$ to label the set of rooted trees on $[n]$ with root $'0'$. Furthermore, let $F_{n,k}$ denote the set of rooted trees on $[n]$ with root $'0'$ and $k$ improper edges. 
	For $T \in F_{n,k}$, I assign a weight to a proper edge by $y$ and represent each improper edge of $T$ by double edges, labeled by $y^2$. \cite{ref4}
	Thus, $'z'$ refer to the root $'0'$, $'x'$ for each child of root $'0'$, and for other all other  vertices it is $'w'$, so that the weight of $T$ is equal to:
	$$\omega_n (T) = zx^{deg_T(n)}y^{n+k}w^{n-deg_T(0)}.$$ 
	
	Where the number of children at the root $0$ is denoted by $deg_T(0)$. Then, $\sum_{T\in F_{n+1}}\omega_n (T) = D^n(z)$.
	
	
	To prove the relation (\ref{equ4.1}), I find that such labeled rooted trees on $[n+1]|_0$ with the weight $x^kw^{n-k}$ mean that the root $'0'$ has $k$ children. Thus, the coefficient is equal to the number of labeled planted forests of $k$-trees on $[n]$.\cite{ref7} 
	$$p_k(n)=\binom{n-1}{k-1}n^{n-k}.$$
	
	Concurrently, the expansion of binomial  $a(a+bn)^{(n-1)} = a \sum_{k=0}^{n-1} \binom{n-1}{k} a^k (nb)^{n-1-k}$. Therefore, the coefficient of $a^kb^{n-k}$ is $\binom{n-1}{k-1}n^{n-k} = p_k(n)$. By ensuring $x=a, w=b$, then $D^n(z)|_{z=y=1,x=a,w=b} = a(a+bn)^{n-1}$ is evident (or let $H_1 : z\rightarrow z(x_1+\cdots+ x_a)y, x_i\rightarrow x_iy(w_1+\cdots+ w_b), y\rightarrow y^3(w_1+\cdots+ w_b), w_j\rightarrow w_jy(w_1+\cdots+ w_b)$, then $x_i=w_j=1$).
	
	Building upon the relation (\ref{equ4.1}), a bijection can be established with multicolored labeled trees where the edge from the root to its children  exhibit $'a'$ distinct colors, while $'b'$ colors are assigned to the edge of other nodes. For the trees with weight $x^kw^{n-k}$ in $D^n(z)$, I substitute it by $a^kb^{n-k}$, since there exists $a^kb^{n-k}$ multicolored labeled trees with the same labeling. 
	By the bijection between multicolored labeled forests and $X$-parking functions with $X = (a,b,...,b)$ \cite{ref6}, completing the proof of Lemma \ref{thm1.2}.  
\end{proof}

In Catherine H. Yan's bijection \cite{ref6}, the number of trees in $i$-th forest $S_i$ is equals to the specification  $j_i$ for $c\in \mathcal{P}_l(X)$ with $X=(a,b,...,b)$. In correlation with Lemma \ref{thm1.2}, the exponents of label $x$ in the weight of $T$ equals $deg_T(0)= j_1+\cdots +j_a$. In other words, this means $deg_T(0)= Card \{ c_k\in c| c_k\leq a \}$, where $c\in \mathcal{P}_l(X)$. On the other hand, $Z(c)=j_1$. Denoted by $q$ when $x$ appears in $S_1$, thus $x$ can choose $1\times'q'$ and $(a-1)\times'1'$. Then I have $\sum_{c\in \mathcal{P}_n(X)} q^{Z(c)}=(q+a-1)(q+a-1+bn)^{n-1}=D^n(z)|_{y=z=1,x=q+(a-1),w=b}$.

\section{A grammar for vector parking functions}

For an $(a, b)$-Dyck path, a simple operation that transforms each $E$-step $(i,j)\rightarrow(i+1,j)$ to the point on the left $(i,j)$. Notably, when substituting  $(i,j)$ with $(i+1,j+1)$, the $(a, b)$-Dyck path is converted into $u$-parking functions. 

\begin{exm}\label{exm4.1}
	As an illustration, the $(4,7)$-parking function $(2,0,3,0,1,2,0)$ is equivalent to a $u$-parking function $(3,1,4,1,2,3,1)$, wherein $u=(1,1,2,2,3,3,4)$ exists. 
\end{exm}

As noted in Lemma \ref{lem1.5}, considering rational $x$-parking functions (depicted by the red line in Figure 1), I find that $(a,b)-PF_b=a^{b-1}=(1+b(\frac{a-1}{b}))^{b-1}=P_b(x)$ when $x=(1,\frac{a-1}{b},...,\frac{a-1}{b})$. This implies that $(a, b)$-parking functions still adhere to the established grammar. To illustrate rational parking functions, I extend the Conclusion 5.6 from \cite{ref9} to the field of rational numbers, as demonstrated in the following proof.

\begin{proof}[Proof of Theorem \ref{thm1.4}.]
	As highlighted in generalized Gon$\check{c}$arov polynomials by substituting the difference operator with a delta operator\cite{ref10}, it introduced a series of conclusions applicable for a fixed sequence $Z=(z_i)_{i\geq0}$ with values in $\mathbbm{K}$, where $\mathbbm{K}$ is a field of characteristic zero. 
	In general, there exists an algebraic property for calculating the generalized Gon$\check{c}$arov polynomials given the basic sequence\cite{ref11}: 
	$$p_m(x) =\sum_{i=0}^m \binom{m}{i} p_{m-i} (z_i) t_i(x;\Delta,Z).$$
	
	Setting $x=0$, I derive the explicit formula that the delta operator does not affect the constant coefficient of the generalized Gon$\check{c}$arov polynomial. Let $R_n=\{(B_1,...,B_k)| B_1\uplus \cdots \uplus B_k= [n]\}$ denote the set of all ordered partitions on the set $[n]$, then for $n\geq 1$, \cite{ref11} 
	\begin{align*}
	t_n(0; -Z) &=\sum_{\rho\in R_n}(-1)^{|\rho|} \prod_{i=0}^{k-1}p_{b_{i+1}} (-z_{s_i}) \\
	&= \sum_{\rho\in R_n}(-1)^{|\rho|} p_{b_1} (-z_0)\cdots p_{b_k}(-z_{s_{k-1}}),
	\end{align*}
	where $b_i=|B_i|$, $|\rho|=k$ and $s_i=\sum_{j=1}^i b_j $.

	\tikz[scale=0.9] {
		
		\begin{tikzpicture}[scale=0.9]
		\draw [green!50] (0,0) grid (7,4);
		\fill [red] (0,0) circle (2pt);
		\fill [gray] (1,0) circle (2pt);
		\fill [gray] (2,1) circle (2pt);
		\fill [gray] (3,1) circle (2pt);
		\fill [gray] (4,2) circle (2pt);
		\fill [gray] (5,2) circle (2pt);
		\fill [gray] (6,3) circle (2pt);
		
		\coordinate (a) at (0,0);\coordinate (b) at (1,0);\coordinate (c) at (2,0);\coordinate (d) at (3,0);\coordinate (e) at (3,1);\coordinate (f) at (4,1);\coordinate (g) at (4,2);\coordinate (h) at (5,2);\coordinate (i) at (6,2);\coordinate (j) at (6,3);\coordinate (k) at (7,3);\coordinate (l) at (7,4);\coordinate [label=below:{(Figure 1) Example 4.1}] (y) at (3.5,0);
		
		\draw[-](a) to [edge label=1] (b);
		\draw[-](b) to [edge label=3] (c);
		\draw[-](c) to [edge label=6] (d);
		\draw[-](d) to (e);
		\draw[-](e) to [edge label=4] (f);
		\draw[-](f) to (g);
		\draw[-](g) to [edge label=0] (h);
		\draw[-](h) to [edge label=5] (i);
		\draw[-](i) to (j);
		\draw[-](j) to [edge label=2] (k);
		\draw[-](k) to (l);
		\draw [dashed,gray](a) to (l);
		\draw [dashed,red](a) to (k);
		\end{tikzpicture}
	}
	~     ~
	\tikz[scale=0.45] {
		
		\begin{tikzpicture}[scale=0.45]
		\draw [green!50] (0,0) grid (14,8);
		\fill [red] (0,0) circle (2pt);
		\fill [gray] (1,0) circle (2pt);
		\fill [gray] (2,1) circle (2pt);
		\fill [gray] (3,1) circle (2pt);
		\fill [gray] (4,2) circle (2pt);
		\fill [gray] (5,2) circle (2pt);
		\fill [gray] (6,3) circle (2pt);
		\fill [red] (7,4) circle (2pt);
		\fill [gray] (8,4) circle (2pt);
		\fill [gray] (9,5) circle (2pt);
		\fill [gray] (10,5) circle (2pt);
		\fill [gray] (11,6) circle (2pt);
		\fill [gray] (12,6) circle (2pt);
		\fill [gray] (13,7) circle (2pt);
		\fill [red] (14,8) circle (2pt);
		\fill [red] (6,18/7) circle (2pt);
		\fill [red] (13,46/7) circle (2pt);
		\coordinate (a) at (0,0);\coordinate (b) at (6,18/7);\coordinate (c) at (7,4);\coordinate (d) at (13,46/7);\coordinate (e) at (14,8);\coordinate [label=below:{(Figure 2) Example 4.3}] (y) at (7,0);

		\draw [dashed,gray](a) to (e);
		\draw [dashed,red](a) to (b);
		\draw [dashed,red](b) to (c);
		\draw [dashed,red](c) to (d);
		\draw [dashed,red](d) to (e);
		
		\end{tikzpicture}
	}

Given the corresponding delta operator $\Delta=D$, the basic sequence $\{p_n(x) = x^n\}_{n\geq 0}$ is associated with $D$. 
For any $ \rho\in R_n$ in the function $t_n(0;D,-Z)$, the polynomial $\prod\limits ^{k-1}_{i=0}p_{b_{i+1}} (-z_{s_i}) =\prod\limits_{i=0}^{k-1} (-z_{s_i})^{b_{i+1}}$ have the same degree $\sum\limits_{i=0}^{k-1} b_{i+1} = n$, then $t_n(0; D, -c\cdot Z)=c^n t_n(0; D, -Z)$. 
Replacing $Z$ with the sequence $u$, I have equation (5) in \cite{ref10} for $u$-parking function:
$$PF_n(u) = t_n(0; D, -u_0, ..., -u_{n-1})=(-1)^nt_n(0; D,u_0,...,u_{n-1}).$$

Therefore, when $u_i$ is multiplied by $c$, the final result of $t_n(0; D, -c\cdot Z)=PF_n(c\cdot u)$ turned into $c^n t_n(0; D, -Z)=c^n PF_n(u)$. This completes the proof.
\end{proof}

	As an immediate inference of Theorem \ref{thm1.4}, I now obtain the following proposition.

\begin{prop}
	An $X$-parking function with $X=(b,a-1,...,a-1)$ of length  $'b'$ can be regarded as $'b'$ multiplicative $f=(1,\frac{a-1}{b},...,\frac{a-1}{b})$. Applying the Lemma \ref{thm1.2} on $\widehat{X}$, 
	it follows that $P_b(X)= PF_b(\widehat{X})=b^b\cdot PF_b(\widehat{f})=D^b(z)|_{z=y=1,x=b,w=a-1}=b(ab)^{b-1}$. 
	
	Given the conclusion of multiplication on rational $X^*$-parking functions with $X^*=f$, I obtain the generalized grammar of $(a, b)$-parking functions where $gcd(a, b) = 1$.
\end{prop}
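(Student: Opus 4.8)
The plan is to read the displayed chain of equalities as a composition of three facts already in hand: the convention that makes an $x$-parking function a $u$-parking function, the scaling law of Theorem~\ref{thm1.4}, and the grammar evaluation of Lemma~\ref{thm1.2}. The first step is to record the elementary identity $X=(b,a-1,\dots,a-1)=b\cdot f$ with $f=\bigl(1,\tfrac{a-1}{b},\dots,\tfrac{a-1}{b}\bigr)$, whence the cumulative (height) vectors satisfy $\widehat{X}=b\,\widehat{f}$, with entries $\widehat{X}_i=b+(i-1)(a-1)$ and $\widehat{f}_i=1+(i-1)\tfrac{a-1}{b}$. Since, by the convention fixed in the introduction, an $x$-parking function is precisely a $u$-parking function with $u=\widehat{x}$, this already gives $P_b(X)=PF_b(\widehat{X})$ with no further work.

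Next I would apply Theorem~\ref{thm1.4} with $n=b$, $k=b$, $u=\widehat{f}$, and $v=\widehat{X}=b\,\widehat{f}$, obtaining $PF_b(\widehat{X})=b^{b}\,PF_b(\widehat{f})$. In parallel, $X$ is a \emph{basic} $x$-parking function of length $b$ with parameters $\alpha=b$ and $\beta=a-1$, both integers, so Lemma~\ref{thm1.2} (equivalently Lemma~\ref{lem1.4}) evaluates it through the grammar $H$:
$$P_b(X)=D^b(z)\big|_{z=y=1,\,x=b,\,w=a-1}=b\,(b+(a-1)b)^{b-1}=b\,(ab)^{b-1},$$
and the $H_1$-form (setting $x_i=w_j=1$) returns the same number. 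Combining the two computations yields $PF_b(\widehat{f})=b^{-b}\cdot b\,(ab)^{b-1}=a^{b-1}$, which is exactly the cardinality produced by Lemma~\ref{lem1.5}.

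It remains to tie this rational $f$-parking function count to genuine $(a,b)$-parking functions. By the E-step shift described just before Example~\ref{exm4.1} (and illustrated there), an $(a,b)$-Dyck path is converted into a $u$-parking function whose height vector is $u_i=1+\lfloor\tfrac{a}{b}(i-1)\rfloor$ for $1\le i\le b$; when $\gcd(a,b)=1$, Lemma~\ref{lem1.5} says there are $a^{b-1}$ of them, the same number as $PF_b(\widehat{f})$. Hence the rational $f$-parking functions and the $(a,b)$-parking functions are equinumerous, and because the grammar $H$ generates exactly the multicolored labeled trees whose total weight, after dividing the specialization $D^b(z)\big|_{z=y=1,\,x=b,\,w=a-1}$ by $b^{b}$, equals $PF_b(\widehat{f})=a^{b-1}$, the grammar $H$ (in the form $H_1$) may legitimately be taken as the generalized grammar for $(a,b)$-parking functions with $\gcd(a,b)=1$. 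This closes the argument.

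The step I expect to be the real obstacle is not any single computation but the justification that $PF_b(\widehat{f})$ — an object defined through the generalized Gon$\check{c}$arov / grammar formalism for a height vector $\widehat{f}$ with non-integral entries — genuinely equals the combinatorial count $a^{b-1}$, rather than merely agreeing with it numerically. This rests on Theorem~\ref{thm1.4} together with the fact that the polynomial identity $D^n(z)\big|_{z=y=1,\,x=\alpha,\,w=\beta}=\alpha(\alpha+\beta n)^{n-1}$ survives verbatim when $\alpha,\beta$ are replaced by rationals, and on carefully distinguishing the two normalizations in play: $P_b$ operates on the increment vector $X$, whereas both $PF_b$ and the scaling in Theorem~\ref{thm1.4} operate on the cumulative vector $\widehat{X}=b\,\widehat{f}$. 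Once that bookkeeping is fixed, the remainder is the routine chain of substitutions displayed above.
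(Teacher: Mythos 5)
Your proposal is correct and follows essentially the same route as the paper, which presents this proposition as an immediate consequence of Theorem \ref{thm1.4} (scaling $\widehat{X}=b\,\widehat{f}$ gives the factor $b^{b}$) combined with the grammar evaluation of Lemma \ref{thm1.2} and the count $a^{b-1}$ from Lemma \ref{lem1.5}, without any separate argument beyond that chain. Your closing remark about the bookkeeping between increment and cumulative vectors, and about $PF_b(\widehat{f})$ being a formal (Gon\v{c}arov/grammar) evaluation that is matched numerically to the $(a,b)$-parking count, reflects exactly the level of justification the paper itself supplies.
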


This section subsequently considers the case of $(ka,kb)$-parking functions with $gcd(ka,kb)=k\in \mathbbm{N}_{>1}$.


\begin{exm}\label{exm4.3}
	In  contrast to Example \ref{exm4.1}, I consider $(4,7)$-parking functions of length $7k(k=2)$. 
	In this way, I admit rational periodic $x$-parking functions (the red dotted line in Figure 2) that satisfy the condition: $x=(1,\overbrace{\tfrac{a-1}{b},...,\tfrac{a-1}{b}}^{b-1},\frac{a+b-1}{b},\overbrace{\tfrac{a-1}{b},...,\tfrac{a-1}{b}}^{b-1})|_{a=4,b=7}$. 

\end{exm}

The combinatorial enumeration of the $x$-parking functions with  $x=(a,b,...,b,d,c,...,c)$ was established as Corollary 2 in 2000 by Catherine H. Yan\cite{ref12}. 
As a further conclusion, I extend $k$ from $2$ to any large positive integer in Theorem \ref{thm1.5} for $x$ in vector form.
Moving forward, the correspondence between the vector parking function and the grammar will be obtained, which can be applied to the q-analogue.

Jiordan articulates the Abel identities in the context of two variables. To extend (5.10) in \cite{ref4} to multiple variables, I utilize the grammar $H$' from \cite{ref4}.

\begin{proof}[Proof of Theorem \ref{thm3.1}.]
	For $i$ from $1$ to $k$, let 
	$$H\text{'}:z_i\rightarrow z_ix_iy, x_i\rightarrow x_iyw, y\rightarrow y^3w, w\rightarrow yw^2.$$
	
	Let $D$ denote the formal derivative associated with $H$'. Utilizing equation (\ref{equ4.1}), $D^n(z_i) |_{z_i=y=w=1} =x_i(x_i+n)^{n-1}$. Since $D(z_1\cdots z_k)=z_1\cdots z_k(x_1+\cdots+ x_k)y$ and $D(x_1+\cdots+ x_k)=(x_1+\cdots+ x_k)yw$, treating $x_1+\cdots+ x_k$ as $X$ and $z_1\cdots z_k$ as $Z$, then $D^n(Z)|_{Z=y=w=1}=X(X+n)^{n-1}$. On the other hands, it is follows from the Leibnitz formula that
	$$D^n(z_1\cdots z_k)|_{Z=y=w=1}=\sum_{i_1+\cdots +i_k=n} \binom{n}{i_1,...,i_k}\prod_{j=1}^{k} D^{i_j}(z_j)|_{z_j=y=w=1} .$$
	
	Combining the results from the previous two parts, this completes the proof.
	
\end{proof}

\begin{cor}\label{cor3}
	Given $x_j=\frac{n}{x-k}$, then
	$$kx^{n-1}= \sum_{i_1+\cdots +i_k=n} \binom{n}{i_1,...,i_k}\prod_{j=1}^{k} (1 +i_j\frac{x-k}{n})^{i_j-1}.$$
\end{cor}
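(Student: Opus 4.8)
\textbf{Proof proposal for Corollary \ref{cor3}.}

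The plan is to derive the identity as a direct specialization of Theorem \ref{thm3.1}. The statement of Theorem \ref{thm3.1} reads
$$\left(\sum_{i=1}^k x_i\right)\left(n+\sum_{i=1}^k x_i\right)^{n-1}=\sum_{i_1+\cdots+i_k=n}\binom{n}{i_1,\dots,i_k}\prod_{j=1}^k x_j\,(x_j+i_j)^{i_j-1},$$
which holds for indeterminates $x_1,\dots,x_k$, hence also after any substitution that keeps both sides well-defined. First I would set every $x_j$ equal to the common value $t:=\tfrac{n}{x-k}$, so that $\sum_{i=1}^k x_i = kt = \tfrac{kn}{x-k}$ and $n+\sum_{i=1}^k x_i = n + \tfrac{kn}{x-k} = n\cdot\tfrac{x}{x-k}$.

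Next I would compute the left-hand side under this substitution:
$$\left(\frac{kn}{x-k}\right)\left(\frac{nx}{x-k}\right)^{n-1} = \frac{k\,n^{n}\,x^{n-1}}{(x-k)^{n}}.$$
On the right-hand side, each factor becomes $x_j(x_j+i_j)^{i_j-1} = t\,(t+i_j)^{i_j-1}$ with $t=\tfrac{n}{x-k}$, so $t+i_j = \tfrac{n+i_j(x-k)}{x-k}$, giving
$$t\,(t+i_j)^{i_j-1} = \frac{n}{x-k}\cdot\frac{\bigl(n+i_j(x-k)\bigr)^{i_j-1}}{(x-k)^{i_j-1}} = \frac{n\,\bigl(n+i_j(x-k)\bigr)^{i_j-1}}{(x-k)^{i_j}}.$$
Taking the product over $j=1,\dots,k$ and using $i_1+\cdots+i_k=n$, the powers of $(x-k)$ in the denominator accumulate to $(x-k)^{n}$, and the factors of $n$ accumulate to $n^{k}$; pulling these out of the sum and matching with the left-hand side yields
$$\frac{k\,n^{n}\,x^{n-1}}{(x-k)^{n}} = \frac{n^{k}}{(x-k)^{n}}\sum_{i_1+\cdots+i_k=n}\binom{n}{i_1,\dots,i_k}\prod_{j=1}^k\bigl(n+i_j(x-k)\bigr)^{i_j-1}.$$
Clearing $(x-k)^{n}$ and dividing by $n^{k}$, one gets $k\,n^{\,n-k}\,x^{n-1}$ on the left; finally I would reabsorb $n^{\,n-k}$ by rewriting $\bigl(n+i_j(x-k)\bigr)^{i_j-1} = n^{i_j-1}\bigl(1+i_j\tfrac{x-k}{n}\bigr)^{i_j-1}$, so the product over $j$ contributes a factor $n^{\,n-k}$ that cancels the one on the left, leaving exactly $kx^{n-1} = \sum_{i_1+\cdots+i_k=n}\binom{n}{i_1,\dots,i_k}\prod_{j=1}^k\bigl(1+i_j\tfrac{x-k}{n}\bigr)^{i_j-1}$, as claimed.

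The computation is entirely routine bookkeeping of powers, so there is no genuine obstacle; the only point requiring a word of care is the legitimacy of substituting a rational expression in $x$ for the indeterminates $x_j$ in Theorem \ref{thm3.1}. This is fine because Theorem \ref{thm3.1} is a polynomial identity in $x_1,\dots,x_k$ (both sides are polynomials that agree, e.g., for all positive integer values, hence as formal polynomials), and both sides of the claimed corollary are, after clearing denominators, polynomial identities in $x$ and $n$; alternatively one simply regards $x$ as a transcendental over $\mathbb{Q}$ so that $x-k\neq 0$ and the substitution $x_j\mapsto n/(x-k)$ is valid in the field $\mathbb{Q}(x)$.
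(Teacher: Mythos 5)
Your proposal is correct, and it is exactly the derivation the paper intends: Corollary \ref{cor3} is stated as the specialization $x_j=\tfrac{n}{x-k}$ of Theorem \ref{thm3.1}, and your bookkeeping of the powers of $n$ and $(x-k)$ (including the degenerate case $i_j=0$) checks out. The paper leaves this computation implicit, so there is nothing further to compare.
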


\begin{proof}[Proof of Theorem \ref{thm1.5}.]
	Let me commence the proof with Theorem \ref{thm1.4}: multiply $'x'$ by $'b'$, denoted as $y=b\cdot x=(b,\overbrace{a-1,...,a-1}^{b-1},\underbrace{b+a-1,\overbrace{a-1,...,a-1}^{b-1},...}_{(k-1) blocks})$. Then I decompose the periodic $x$-parking function $f=(s_1,...,s_{kb})$ into $k$ basic $x$-parking functions such that $j$ is the largest integer holding for all $i=0,...,j$:
	$$ Card \{ s_r |s_r\leq b+(b-1+i)(a-1)\} \geq b+i.$$

	\indent Therefore, those $j_1 = b+j $ elements $\{s_r |s_r\leq b+(j_1-1)(a-1)\}$ correspond to $f_1$. And the following ordered elements $s_{(b+j+m)}$ lying between $b+(b+j)(a-1)$ and $2b+(b+j+m-1)(a-1)$ satisfies the condition of basic $x$-parking functions as well (if $s_{(b+j+1)}>2b+(b+j)(a-1)$, then $f_2$ is empty). It holds that there exists $j_2$ as the greatest integer fit to the condition $' Card \{ s_r |b+(b+j)(a-1)\leq s_r\leq 2b+(b+j+i-1)(a-1)\} \geq i'$ for all $0\leq i\leq j_2$. Similarly, $f_2 \sim \{ s_r |b+j_1(a-1)\leq s_r\leq 2b+(j_1+j_2-1)(a-1)\}$. Let's continue the above process and eventually split out $k$ sequences in turn, $f_1,...,f_k$ with lengths $J=(j_1,...j_k)$, where $j_1\geq b,..., j_1+\cdots+j_i \geq ib, ...,j_1+\cdots + j_k = kb$, which is consistent with $J\in spec(k,b)$. Therefore, it follows that
	\begin{equation}\label{equ4.2}
	PF_{kb}(\widehat{y})= b^{kb}\cdot PF_{kb}(\widehat{x})=
	\sum_{J\in spec(k,b)
	} \binom{kb}{j_1,...,j_k}b^k\cdot \prod_{i=1}^{k} (b+j_i(a-1))^{j_i-1}. 
	\end{equation}

	The following proves that $PF_{kb}(\widehat{x}) = (a,b)-PF_{kb}$. Let $ h_j = (1+j \frac{a-1}{b})^{j-1}$, and abbreviate $\binom{mb} {j_1,...,j_m} h_{j_1}\cdots h_{j_m}$ as $\binom{mb}{j_1,...,j_m}_h$. Therefore, I just need to focus on the assignment of $J$. For $\sum_{i=1}^k i\cdot t_i=k$, denote $B(t_1,...,t_k)= \{ \{C_{1,1},...,C_{1,t_1},...,C_{k,t_k}\} |  C_{i,l} = (j_{i,l,1},...,j_{i,l,i}) \; is\; a\; circle,\; j_{i,l,m}\in J, \;1\leq l\leq t_i,\; C_{1,1}\uplus \cdots \uplus C_{k,t_k}=J \}$ the types of circle partition of $J$. Referring to the $Fa\grave{a} di Bruno$ grammar\cite{ref18}, I define $F: \{f_i \rightarrow f_{i+1}g_1,g_i \rightarrow i\cdot g_{i+1} \}$, then
	$D^k(f_0)= \sum\limits_{t=1}^k f_t \sum\limits_{\tiny \begin{array}{c}
		t_1+\cdots+t_k=t\\
		t_1+\cdots+k\cdot t_k=k
		\end{array} } \frac{k!}{t_1!\cdots t_k!\cdot 1^{t_1}\cdots k^{t_k}} g_1^{t_1}\cdots g_k^{t_k}. $
	
	The circle partition argument $Card \; B(t_1,...,t_k)$ is equal to the coefficient of $g_1^{t_1}\cdots g_k^{t_k}$. Now assign values to $J$ such that $j \geq 0$ and $\sigma (C_{i,l}) = j_{i,l,1}+\cdots +j_{i,l,i}= b\cdot i$. 
	Let tiny circle denote an assigned circle $C$ such that $\{S\subsetneq C| \sigma (S) = b\cdot |S| \}= \phi $, then $j\neq b$ for all $j\in C$. 
	By Section 3 in \cite{ref8}, an assigned tiny circle $C_{i,l}$ (appearing $i$ times in rotation) corresponds to a sequence $(j_{i,l,m},j_{i,l,m+1}, ...,j_{i,l,m-1})$ satisfying $spec(i,b)$. Moreover, a circle formed by stitching together $t = t_1+\cdots+t_k$ tiny circles (with $1^{t_1}\cdots k^{t_k}$ times) in a given order can respond to a sequence satisfying $spec(k,b)$. Notice that the operation of tiny circle rotation and spliced circle rotation do not affect each other. Therefore, to cover the permutation, it is necessary to sum over all possible circle divisions of the splice circle. On the other hand, since the value of $j_i$ represents the length of $f_i$, it is necessary to select $j_i$ positions from the $[kb]$.
	$$k!\cdot\sum_{J\in spec(k,b)} \binom{kb}{j_1,...,j_k}_h=
	\sum_{t_1+\cdots+k t_k=k}  \frac{|B(t_1,...,t_k)|\cdot kb!}{b!^{t_1}\cdots (kb)!^{t_k}}\prod_{r=1}^{k} \left[\sum_{j_1+\cdots j_r=rb} \binom{rb}{j_1,...,j_r}_h\right]^{t_r}.
	$$
	
	Substituting Corollary \ref{cor3} with $x=ka,n=kb$, the righthand side simplifies to $k!\sum\limits_{t_1+\cdots+k t_k=k} \frac{kb!}{t_1!\cdots t_k!\cdot b!^{t_1}\cdots (kb)!^{t_k}}\prod \limits_{r=1}^{k} (ra)^{(rb-1)\cdot t_r}$.
	
	By comparing the coefficient of 
	$exp(\sum_{k\geq 1}  \frac{(ka)^{kb-1}}{(kb)!}z^k) $ at $\frac{z^k}{(kb)!} $ introduced by Yue Cai and Catherine H. Yan \cite{ref8} in Corollary 2.9 and Theorem 3.2, I have  $k!\cdot\sum_{J\in spec(k,b)} \binom{kb}{j_1,...,j_k}_h=k!(a,b)-PF_{kb}$. 
	 Verified by (\ref{equ4.2}), $(a,b)-PF_{kb}$ is equivalent to the periodic $x$-parking function $PF_{kb}(\widehat{x})$ as above, this completes the proof of Theorem \ref{thm1.5}.
\end{proof}


Next, I will construct a grammar interpretation. 
There is a bijection between a labeled-planted-forests of $k$-trees with roots $r_i=|T_i|-1$ and an  $(kab, kb)$-parking function.
\begin{prop}\label{prop4.7}
	 ~
	 
	Step 1. I repeat the above partition operation for the periodic $x$-parking function $PF_{kb}(\widehat{x})$ and divide it into $k$ blocks (basic $x$-parking functions): $f_1,...,f_k$ with lengths $j_1,...j_k$. Each block occupies a row without changing the column order of elements $f=(f_1,...,f_k)^T$.
	
	Step 2. To see the classical form of $'f_i'$, I keep the elements of $'f_1'$ the same and subtract $ib+(j_1+ \cdots +j_i)(a-1)$ from each entry in $'f_{i+1}'$. Then each $'f_i'$ attaches an $x$-parking function with $x^*=(b,a-1,...,a-1)$
	
	Step 3. The operation in \cite{ref6} is repeated to turn those $x$-parking functions $'f_i'$ into multicolored trees $T_i$ with a root $r_i=|T_i|-1=j_i$, as in the proof of Lemma \ref{thm1.2}. Notice that every row doesn't affect each other in $'f'$, I add $ib+(j_1+ \cdots +j_i)(a-1)$ to the nodes of tree $T_{i+1}\backslash \{r_{i+1}\}$. Then any vertex $u$ in $T_i\backslash \{r_i\}$ and any vertex $v$ in $T_{i+1}\backslash \{r_{i+1}\}$ has $u\leq ib+(j_1+ \cdots +j_i)(a-1)< v$. So there is only one multicolored labeled forest $'F'$ made up of trees $T_1,...,T_k$. 
	
	In conclusion, I have constructed a mapping from $(a\cdot b,b)-PF_{kb}$ to a multicolored labeled forest. Note that each block $'f_i'$ with $ k_i= Card \{ s_r |(i-1)b+(j_1+ \cdots +j_{i-1})(a-1)\leq s_r\leq ib+(j_1+ \cdots +j_{i-1})(a-1)\} $ corresponds to the tree $T_i$ with $k_i$-children next to the root $r_i$. Let $M(a,b,k)$ be the set of all those forests above.
\end{prop}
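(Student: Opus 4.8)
\textbf{Proof proposal for Proposition \ref{prop4.7}.}

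The plan is to verify that the three steps are each well-defined and each reversible, so that their composite is a bijection from $(ab,b)\text{-}PF_{kb}$ onto the set $M(a,b,k)$ of forests it produces, and then to read off the root-degree statement. First I would fix an $(ab,b)$-parking function of length $kb$ and pass, via the correspondence of Example \ref{exm4.1}, to the associated periodic $u$-parking function $f=(s_1,\dots,s_{kb})=PF_{kb}(\widehat{x})$. For Step 1 I would invoke the greedy decomposition already carried out in the proof of Theorem \ref{thm1.5}: choosing $j_1$ maximal so the first $j_1$ order statistics clear the thresholds $b+(b-1+i)(a-1)$, then $j_2$ maximal for the next layer, and so on, produces a unique tuple $J=(j_1,\dots,j_k)\in spec(k,b)$, and the block $f_i$ is the subsequence (in the original column order) of those $s_r$ lying in the value window between $(i-1)b+(j_1+\cdots+j_{i-1})(a-1)$ and $ib+(j_1+\cdots+j_{i-1})(a-1)$. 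The substantive point to check here is that shifting $f_i$ down by $(i-1)b+(j_1+\cdots+j_{i-1})(a-1)$ turns it into a \emph{basic} $x$-parking function with $x^{*}=(b,a-1,\dots,a-1)$, i.e.\ its order statistics $t_1\le\cdots\le t_{j_i}$ satisfy $t_1\le b$ and $t_m\le b+(m-1)(a-1)$; this is exactly the conjunction of the maximality of $j_1,\dots,j_{i-1}$ (which forces $f_i$ to ``start fresh'' at the correct layer, handling also the case of an empty block) with the $u$-parking condition on $f$.

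For Step 2, the shift is plainly invertible once $J$ is known, and by the previous paragraph each $f_i$ is now a basic $x$-parking function of length $j_i$ and type $x^{*}=(b,a-1,\dots,a-1)$. For Step 3 I would apply Catherine H.\ Yan's bijection \cite{ref6}, in the tree form used in the proof of Lemma \ref{thm1.2}: each such $f_i$ corresponds bijectively to a multicolored labeled tree $T_i$ with root $r_i$ and $j_i$ non-root vertices, carrying $b$ colors on edges at the root and $a-1$ colors on all other edges. Re-adding the shift $ib+(j_1+\cdots+j_i)(a-1)$ to the non-root vertices of $T_{i+1}$ places the trees on pairwise disjoint label sets arranged in strictly increasing layers, so that every $u\in T_i\setminus\{r_i\}$ and $v\in T_{i+1}\setminus\{r_{i+1}\}$ satisfy $u\le ib+(j_1+\cdots+j_i)(a-1)<v$; hence the resulting multicolored forest $F$ on $T_1,\dots,T_k$ is uniquely determined, and conversely the layer structure lets one recover the ordered tuple $(T_1,\dots,T_k)$, then $(f_1,\dots,f_k)$, and finally $f$ by re-interleaving the columns. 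Defining $M(a,b,k)$ as the collection of forests so obtained, each of Steps 1--3 is a bijection on its domain, so the composite $(ab,b)\text{-}PF_{kb}\to M(a,b,k)$ is a bijection.

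It remains to justify the final ``moreover'': that block $f_i$ with $k_i=\mathrm{Card}\{s_r\mid (i-1)b+(j_1+\cdots+j_{i-1})(a-1)\le s_r\le ib+(j_1+\cdots+j_{i-1})(a-1)\}$ corresponds to $T_i$ having exactly $k_i$ children at the root $r_i$. This follows from the degree identity noted after the proof of Lemma \ref{thm1.2}: for a basic $x$-parking function of type $(\alpha,\beta,\dots,\beta)$ the root degree $\deg_T(0)$ equals the number of entries that are $\le\alpha$; applied to the shifted block $f_i$ with $\alpha=b$, this count is precisely the number of unshifted entries of $f_i$ in the stated window, namely $k_i$. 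The main obstacle in the whole argument is the verification in Step 1 that the greedy blocks become basic $x$-parking functions of type $x^{*}$ after shifting --- the careful bookkeeping with the periodic thresholds, the ``fresh start'' at each layer, and the empty-block edge case. Everything after that is an application of the established bijections (Yan \cite{ref6}, Lemma \ref{thm1.2}) together with the decomposition already produced in the proof of Theorem \ref{thm1.5}, and amounts to tracking labels rather than new combinatorics.
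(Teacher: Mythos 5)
Your write-up follows the paper's construction (the proposition is stated as a construction, without a separate proof), and your verification of the parts the proposition actually claims --- that the greedy decomposition of Theorem \ref{thm1.5} makes each shifted block a basic $x$-parking function of type $x^{*}=(b,a-1,\dots,a-1)$, that Yan's bijection \cite{ref6} then produces the trees $T_i$, and that $k_i$ equals the number of children of $r_i$ via the degree identity after Lemma \ref{thm1.2} --- is in line with the paper. However, you assert strictly more, and the extra assertion is false: the composite map is not injective, and $f$ cannot be recovered from the forest $F$ ``by re-interleaving the columns.'' In Step 3 the non-root vertices of $T_{i+1}$ receive the fixed labels $ib+(j_1+\cdots+j_i)(a-1)+1,\dots,ib+(j_1+\cdots+j_i)(a-1)+j_{i+1}$, which depend only on the specification $J$, not on which of the $kb$ original positions fell into block $i+1$; the interleaving data is destroyed, and the forest gives no instruction for how to re-interleave. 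Concretely, for $k=2$, $b=1$ (so $u=(1,1+a)$), the parking functions $(1,a+1)$ and $(a+1,1)$ have identical blocks $f_1=(1)$, $f_2=(a+1)$ and hence identical forests. The map is $\binom{kb}{j_1,\dots,j_k}$-to-one on the parking functions with block lengths $J$, which is exactly why the multinomial coefficient appears in (\ref{equ4.2}) and in Lemma \ref{thm1.6}; if your bijection claim were correct, $\vert M(a,b,k)\vert$ would have to equal $(a,b)-PF_{kb}$, contradicting those counts. To get an honest bijection you would have to enrich the target, e.g.\ map $f$ to the pair consisting of $F$ together with the ordered partition of $[kb]$ recording which positions lie in which block (equivalently, label the non-root vertices by the original positions, at the cost of losing the clean inequality between consecutive trees).

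A smaller but related slip: you describe the block $f_i$ as the subsequence of entries in the value window from $(i-1)b+(j_1+\cdots+j_{i-1})(a-1)$ up to $ib+(j_1+\cdots+j_{i-1})(a-1)$. That upper endpoint is the one defining $k_i$ (the entries that become children of the root after shifting); the block itself extends up to $ib+(j_1+\cdots+j_i-1)(a-1)$, as in the proof of Theorem \ref{thm1.5}. With your window the two notions coincide and one would get $k_i=j_i$ for every $i$, which is not what the proposition asserts; your final paragraph implicitly uses the correct, larger window, so the statement of Step 1 should be amended accordingly.
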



By Lemma \ref{thm1.2}, the number of multicolored labeled trees on $[n]$ has been determined as $g(n)=a(a+bn)^{n-1}$. Then using the exponential formula\cite{ref7}, I have $h(n)= \sum_{T|_1^k} g( |T_1|) \cdots g(|T_k|)$ 
where $\{T_1,...,T_k \} $ is a forest partition of $[n]$. I denote by $E_g(t)=\sum_{n\geq 0}g(n)\frac{t^n}{n!}$ and $E_h(t)$ the exponential generating functions of the functions $g$ and $h$, respectively. 

Considering that not all trees that appear in $E_h(t)$ are valid, as requested by $spec(k,b)$. I place a stamp $'i'$ on $T_i$ to distinguish between trees in the forest, and then $E_h(t)=\prod_{i=1}^{k}E_g(t_i)$. In each monomial $t_1^{j_1}\cdots t_k^{j_k}$ of $E_h(t)$, $j_i$ represents the size of $T_i$. However, it is impossible to calculate the coefficients of $E_h(t)$ that satisfy requirement $J\in spec(k,b)$. Nonetheless, utilizing a formal derivative $D$ with respect to grammar $H$ in equation (\ref{equ1.3}), a simple construction as follows.

\begin{lem}
	For $k\geq 1$, let $\delta (\cdot)= D^b(z \; \cdot ) $ as an operator, then
	$\sum\limits_{F\in M(a,b,k)} \omega(F)= \delta^k(1).$
\end{lem}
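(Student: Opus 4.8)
The plan is to interpret the operator $\delta(\cdot) = D^b(z\,\cdot)$ as the grammatical act of ``attaching one new basic $x$-block as an extra root-tree,'' and then to iterate it $k$ times so that $\delta^k(1)$ records exactly the weighted forests in $M(a,b,k)$. First I would set up the bookkeeping: by Lemma~\ref{thm1.2}, applying $D^b$ to $z$ (with the specialization of variables producing multicolored trees) yields $D^b(z) = \sum_{j\ge 1} p_j(b)\,(\text{weight of a tree on } j \text{ vertices with root-children data})\,z$, so that $\delta(1) = D^b(z)|_{z=1}$ enumerates all multicolored labeled trees $T_1$ whose root $r_1$ has $|T_1|-1 = j_1$ non-root vertices, weighted by $\omega(T_1)$. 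This matches the $k=1$ case of Proposition~\ref{prop4.7}, where a single block $f_1$ of length $j_1$ is turned into a tree $T_1$.

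\smallskip

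Next I would argue the inductive step. Assuming $\delta^{m}(1) = \sum_{F \in M(a,b,m)} \omega(F)$, I apply $\delta$ once more: $\delta^{m+1}(1) = D^b(z \cdot \delta^m(1))$. The factor $z$ plays the role of the fresh root $r_{m+1}$, and the Leibniz rule together with the grammar $H$ distributes the $b$-fold derivative. Because the variables $z$ (new root), $x$ (its children), $y$ (proper edges), $w$ (remaining vertices) are shared with the already-built forest, I would need to check that the cross-terms of the Leibniz expansion correctly encode the combinatorial operations of Step~2 and Step~3 in Proposition~\ref{prop4.7}: shifting the labels of $T_{m+1}\setminus\{r_{m+1}\}$ upward by $mb + (j_1+\cdots+j_m)(a-1)$, so that the new block sits entirely above the previous ones, and recording that the new root contributes one extra tree to the forest. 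The multinomial coefficient $\binom{(m+1)b}{\,\cdot\,}$ that must appear is produced by the $\binom{kb}{j_1,\dots,j_k}$-style label-interleaving implicit in the derivative acting on a product; this is the point where I would lean on the exponential-formula / $E_h(t) = \prod E_g(t_i)$ discussion preceding the lemma, since $\delta$ is precisely the operator-level shadow of multiplying the exponential generating function by one more marked factor $E_g(t_{m+1})$ and then extracting the $b$-step truncation dictated by $spec(k,b)$.

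\smallskip

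The main obstacle, as I see it, is verifying that $\delta$ respects the $spec(k,b)$ constraint rather than merely enumerating all forests of $k$ marked trees. A naive reading gives $\delta^k(1) = \sum_{j_1,\dots,j_k\ge 0} \binom{kb}{j_1,\dots,j_k} \prod \omega(T_i)$ over \emph{all} compositions, but $M(a,b,k)$ only contains forests coming from partitioning a genuine periodic parking function, i.e. those with $j_1 + \cdots + j_t \ge tb$ for every $t$. The resolution must be that the derivative $D^b$ ``consumes'' $b$ units at a time in a way that forces each partial sum to be at least $tb$ after $t$ applications — essentially because each $\delta$ removes exactly $b$ from a pool that is only ever replenished by the current block's own vertices, so a block of size $j_t$ contributes $j_t - b$ (possibly negative) to the running surplus, and the operator is only well-defined / non-vanishing when the surplus stays nonnegative. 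I would make this precise by tracking, through the $k$-fold application, the exponent of $z$ (always $1$, the current root) against the total vertex count, and showing the $spec$ inequalities are exactly the non-negativity conditions for the intermediate coefficients; once that is established, comparison with the right-hand side of Theorem~\ref{thm1.5} closes the proof.
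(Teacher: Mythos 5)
The paper does not actually prove this lemma: it is stated and then immediately sidestepped (the very next sentence observes that one cannot compute with $\delta$ via a Leibniz-type expansion), and the enumeration the author really uses is the one in Lemma \ref{thm1.6}, obtained from the grammar $K$ with the counters $t_i$ and the step function. So your proposal has to stand on its own, and it has a genuine gap exactly at the point you describe as bookkeeping — but the problem is the multiplicities, not the $spec(k,b)$ support. Your ``naive reading'' $\delta^k(1)=\sum_{j_1+\cdots+j_k=kb}\binom{kb}{j_1,\dots,j_k}\prod_i\omega(T_i)$ is not what iterating $\delta$ produces. Since $D$ is a derivation, $D^b\bigl(z\cdot\delta^m(1)\bigr)$ distributes only the $b$ newest derivative steps between the fresh root and the already-built forest; the root introduced at stage $m+1$ can never absorb any of the first $mb$ steps. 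Hence the coefficient of a size vector $J$ in $\delta^k(1)$ counts only those label assignments in which every label from $((t-1)b,\,tb]$ lies in a tree of index at most $t$, which is in general strictly smaller than $\binom{kb}{j_1,\dots,j_k}$, whereas the forests of Proposition \ref{prop4.7} (equivalently equation (\ref{equ4.2}), which Lemma \ref{thm1.6} is said to elucidate) must carry the full multinomial, because the positions of the $(m+1)$-st block of a periodic parking function interleave arbitrarily among all $kb$ positions. The $spec$ inequalities do come for free exactly as you argue (that part of your plan is fine), but the missing interleavings cannot be recovered by any Leibniz bookkeeping, so the inductive step cannot close.

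Concretely, take $k=b=2$ and write $T_n=D^n(z)|_{z=y=1,\,x=b,\,w=a-1}=b\bigl(b+n(a-1)\bigr)^{n-1}$, $T_0=1$. The higher Leibniz rule gives, as a polynomial identity,
\begin{equation*}
\delta^2(1)=D^2(z)\,D^2(z)+2\,D(z)\,D^3(z)+z\,D^4(z),
\qquad
\delta^2(1)\big|_{z=y=1,\,x=b,\,w=a-1}=T_2^2+2T_1T_3+T_4,
\end{equation*}
while the forest count required by (\ref{equ4.2}) is $6T_2^2+4T_1T_3+T_4$. At $a=1$, $b=2$ these are $64$ versus $176=2^4\cdot 11$ (there are exactly $11$ parking functions for $u=(1,1,2,2)$). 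So the identity you are asked to prove fails under the weight/forest interpretation your argument (and the rest of the paper) needs, unless $M(a,b,k)$ and $\omega$ are read as ``whatever $\delta^k(1)$ generates,'' which would make the statement vacuous. This discrepancy is presumably why the paper abandons $\delta$: in Lemma \ref{thm1.6} the $kb$-th derivative is applied to the product $z_1\cdots z_k$ all at once, which is what genuinely produces the multinomial coefficients, and the $spec(k,b)$ restriction is then imposed by hand through the step function. If you want to salvage your approach, you would have to either redefine $M(a,b,k)$ (with a precise labeling convention matching the stage-restricted assignments) or replace $\delta$ by an operator acting on all $k$ root letters simultaneously.
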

Whereas, $\delta$ is not a linear operator, which means the Leibnitz formula doesn't take effect. Hence this will make it impossible for the calculation process to proceed smoothly.
To avoid this conundrum, I introduce the step function Paraded $\varepsilon$ and the grammar $K$ in equation (\ref{equ2.2}), aiding us in proving Lemma \ref{thm1.6}.

\begin{lem}\label{thm1.6}
	Define grammar $K$ as follows:
	\begin{equation}\label{equ2.2}
	K:z_i\rightarrow z_ix_iy_it_i, x_i\rightarrow x_iy_iw_it_i, y_i\rightarrow y_i^3w_it_i, w_i\rightarrow y_iw_i^2t_i, t_i\rightarrow 0.
	\end{equation}
	Let $D$ denote the formal derivative with respect to the above grammar $K$. Suppose $z_i=y_i=1,x_i=b,w_i=a-1,t_i^{j_i}=\varepsilon( (j_1-b)+\cdots+(j_i-b)$, where $\varepsilon (x)$ refers to step function Paraded. Then for $gcd(a,b)=1$, I have
	$$D^{kb}(z_1\cdots z_k)|_{\tiny \begin{array}{c}
		z_i=y_i=1,x_i=b,w_i=a-1\\
		t_i^{j_i}=\varepsilon( (j_1-b)+\cdots+(j_i-b) )\end{array} }
	=\sum_{ J\in spec(k,b)	} \binom{kb}{j_1,...,j_k}b^k\cdot \prod_{i=1}^{k} (b+j_i(a-1))^{j_i-1}. $$
\end{lem}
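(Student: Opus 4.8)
\textbf{Proof plan for Lemma \ref{thm1.6}.}
The plan is to unravel the grammar $K$ so that each superscript block of variables $(z_i, x_i, y_i, w_i, t_i)$ behaves exactly like the grammar $H$ of \eqref{equ1.3} acting on its own alphabet, with the extra companion variable $t_i$ serving only as a bookkeeping device. First I would observe that the rules for $z_i, x_i, y_i, w_i$ coincide verbatim with $H$ except each production is multiplied by an additional factor $t_i$, while $t_i \rightarrow 0$ makes each $t_i$ a constant (in the sense of Definition \ref{defn1.1}(3)). Consequently, applying $D$ to a monomial in the $i$-th block produces the same terms as $H$-differentiation, each decorated with one more power of $t_i$; an induction on the number of derivatives shows that every monomial occurring in $D^{m}(z_i)$ carries exactly $t_i^{m}$. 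More precisely, I would prove the block identity
\begin{equation}\label{equ-plan-block}
D^{m}(z_i) = t_i^{m}\, \widetilde{D}^{m}(z_i),
\end{equation}
where $\widetilde{D}$ is the formal derivative of $H$ on the alphabet $\{z_i,x_i,y_i,w_i\}$; this is immediate by induction since $D(z_i f) = t_i \widetilde{D}(z_i f)$ whenever $f$ is a monomial in the $i$-th block. Combined with Lemma \ref{thm1.2} (equation \eqref{equ4.1}), specializing $z_i=y_i=1$, $x_i=b$, $w_i=a-1$ gives $D^{m}(z_i)\big| = t_i^{m}\, b(b+m(a-1))^{m-1}$, which already contains the per-block factor appearing in the target sum.

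Next I would handle the cross terms. Since the five-variable blocks for distinct indices $i\ne j$ involve disjoint alphabets, $D$ acts on a product $z_1\cdots z_k$ by the Leibniz rule with no interaction, so
\begin{equation}\label{equ-plan-leibniz}
D^{kb}(z_1\cdots z_k) = \sum_{j_1+\cdots+j_k = kb} \binom{kb}{j_1,\dots,j_k} \prod_{i=1}^{k} D^{j_i}(z_i).
\end{equation}
Substituting \eqref{equ-plan-block} and the specialization above turns the right side into
$$\sum_{j_1+\cdots+j_k=kb} \binom{kb}{j_1,\dots,j_k}\, \Big(\prod_{i=1}^{k} t_i^{j_i}\Big)\, b^k \prod_{i=1}^{k} (b+j_i(a-1))^{j_i-1}.$$
The final move is to impose the substitution $t_i^{j_i} = \varepsilon\big((j_1-b)+\cdots+(j_i-b)\big)$, where $\varepsilon$ is the Heaviside step function ($\varepsilon(x)=1$ for $x\ge 0$, $\varepsilon(x)=0$ for $x<0$). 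I would check that $\prod_{i=1}^k \varepsilon\big(\sum_{\ell\le i}(j_\ell-b)\big) = 1$ exactly when $j_1+\cdots+j_i \ge ib$ for every $i=1,\dots,k$, and is $0$ otherwise; together with the constraint $j_1+\cdots+j_k = kb$ coming from \eqref{equ-plan-leibniz}, this is precisely the membership condition $J\in spec(k,b)$ from Theorem \ref{thm1.5}. Hence the sum collapses to $\sum_{J\in spec(k,b)} \binom{kb}{j_1,\dots,j_k} b^k \prod_{i=1}^k (b+j_i(a-1))^{j_i-1}$, as claimed.

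The main obstacle I anticipate is making the $t_i$-bookkeeping rigorous: the substitution $t_i^{j_i} = \varepsilon(\cdots)$ is applied monomial-by-monomial after all differentiations are complete, not as an algebra homomorphism (indeed it is not multiplicative in $j_i$), so I must be careful to state that \eqref{equ-plan-block} produces a \emph{single} power $t_i^{m}$ in \emph{every} monomial of $D^m(z_i)$ — this is what lets the substitution be well-defined term by term. A secondary point is confirming that the gcd hypothesis $\gcd(a,b)=1$ is used only to guarantee that $spec(k,b)$ is the correct index set identified in Theorem \ref{thm1.5} (the algebra of the grammar itself does not need it), so the lemma's statement is consistent with the earlier results. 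Once these bookkeeping issues are dispatched, the identity follows by direct comparison with the right-hand side of \eqref{equ4.2} in the proof of Theorem \ref{thm1.5}.
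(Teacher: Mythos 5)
Your proposal is correct and follows essentially the same route as the paper: expand $D^{kb}(z_1\cdots z_k)$ by the Leibniz/multinomial formula over the disjoint blocks, note that every monomial of $D^{j_i}(z_i)$ carries exactly $t_i^{j_i}$ so that each block evaluates to $b(b+j_i(a-1))^{j_i-1}$ via Lemma \ref{thm1.2}, and then let the monomial-by-monomial substitution $t_i^{j_i}=\varepsilon\bigl((j_1-b)+\cdots+(j_i-b)\bigr)$ filter the sum down to $J\in spec(k,b)$. Your explicit block identity $D^{m}(z_i)=t_i^{m}\widetilde{D}^{m}(z_i)$ and the remark that the $\varepsilon$-substitution is not a homomorphism are just more careful statements of what the paper asserts implicitly.
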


\begin{proof}[Proof of Lemma \ref{thm1.6}.]
	In contrast with Lemma \ref{thm1.2}, I introduce $t_i=|T_i|-1$ as a counter for the tree $T_i$. Derivating for $z_1\cdots z_k$, the weight of the forest $F$ is as follows, 
	$$\omega (F) = \prod_{i=1}^{k} z_ix_i^{deg_T(j_i)}y_i^{j_i+k}w_i^{j_i-deg_T(j_i)}t_i^{j_i}.$$
	\indent Considering the property of the Dirac delta function, let me introduce its integral, 
	\begin{equation*}
	\varepsilon(x)=\left\lbrace 
	\begin{aligned}
	1, x\geq 0; \\
	0, x<0.
	\end{aligned} 
	\right.
	\end{equation*}
	
	Based on the relationship of the rules, I can define the function $h(t_1^{j_1}\cdots t_k^{j_k})=\varepsilon(j_1-b)\cdots \varepsilon( (j_1-b)+\cdots+(j_i-b) ) \cdots $. As will be seen, a context-free grammar for matching forests according to rules is provided. Suppose that $D$ denotes the formal derivative with respect to $K$ in (2.2), then I have $D(z_i)= z_ix_iy_it_i, D(x_i)=x_iy_iw_it_i, D(y_i)= y_i^3w_it_i, D(w_i)= y_iw_i^2t_i, D(t_i)= 0 $. Whereupon, the following conclusions are drawn from the definition of $spec(k,b)$ and equation (\ref{equ4.1}),
	
	\begin{equation*}
	\begin{aligned}
	D^{kb}(z_1&\cdots z_k)|_{\tiny \begin{array}{c}
		z_i=y_i=1,x_i=b,w_i=a-1\\
		t_i^{j_i}=\varepsilon( (j_1-b)+\cdots+(j_i-b) )\end{array} }\\
	=\sum_{J\in spec(k,b)}& \binom{kb}{j_1,...,j_k} \prod_{i=1}^{k} D^{j_i}(z_i)|_{
		z_i=y_i=1,x_i=b,w_i=a-1,t_i=1 }\\
	=\sum_{J\in spec(k,b)}& \binom{kb}{j_1,...,j_k} \prod_{i=1}^{k} b\cdot (b+j_i(a-1))^{j_i-1} .
	\end{aligned}
	\end{equation*}
	Hence Lemma \ref{thm1.6} is proved.
\end{proof}

Note that Lemma \ref{thm1.6} elucidates the enumeration of forests mentioned in equation (\ref{equ4.2}), 
 which is likewise the quantities of periodic $x$-parking functions.
\begin{cor}
	For $gcd(a,b)=1$,
	$$(a,b)-PF_{kb}(= P_{kb}(X)) =D^{kb}(z_1\cdots z_k)|_{\tiny \begin{array}{c}
		z_i=y_i=1,x_i=1,w_i=\frac{a-1}{b}\\
		t_i^{j_i}=\varepsilon( (j_1-b)+\cdots+(j_i-b) )\end{array} }.$$
\end{cor}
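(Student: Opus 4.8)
The plan is to combine the previous lemma with the rescaling theorem and apply them to the concrete vector in the statement of the corollary. First I would observe that the vector $x=(1,\tfrac{a-1}{b},\dots,\tfrac{a-1}{b},\tfrac{a+b-1}{b},\dots)$ in periodic form, when multiplied entrywise by the integer $b$, becomes $y=(b,a-1,\dots,a-1,b+a-1,a-1,\dots)$, which is exactly the vector whose hatted partial sums appear in Lemma \ref{thm1.6}. By Theorem \ref{thm1.4} (with $k=b$ playing the role of the scalar), $PF_{kb}(\widehat{y})=b^{kb}\cdot PF_{kb}(\widehat{x})$, so dividing the formula of Lemma \ref{thm1.6} by $b^{kb}$ recovers a formula for $PF_{kb}(\widehat{x})$ alone; this is precisely the content of equation (\ref{equ4.2}) in the proof of Theorem \ref{thm1.5}, which already identifies $PF_{kb}(\widehat{x})$ with $(a,b)\text{-}PF_{kb}$ and with $P_{kb}(X)$.

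Next I would carry out the substitution $z_i=y_i=1$, $x_i=1$, $w_i=\frac{a-1}{b}$ in $D^{kb}(z_1\cdots z_k)$ with respect to grammar $K$, keeping the counter constraint $t_i^{j_i}=\varepsilon((j_1-b)+\cdots+(j_i-b))$ unchanged. Since the formal derivative with respect to $K$ acts on each block $z_i$ independently (the cross terms vanish because $t_i\to 0$ means the $y_i,w_i$-variables of distinct blocks never interact, exactly as in the proof of Lemma \ref{thm1.6}), the Leibniz expansion gives
\begin{equation*}
D^{kb}(z_1\cdots z_k)\big|_{\cdots}=\sum_{J\in spec(k,b)}\binom{kb}{j_1,\dots,j_k}\prod_{i=1}^{k}D^{j_i}(z_i)\big|_{z_i=y_i=1,\,x_i=1,\,w_i=\frac{a-1}{b},\,t_i=1}.
\end{equation*}
By equation (\ref{equ4.1}) applied with $x=1$, $w=\frac{a-1}{b}$, the $i$-th factor equals $\bigl(1+j_i\tfrac{a-1}{b}\bigr)^{j_i-1}$, so the right-hand side is $\sum_{J\in spec(k,b)}\binom{kb}{j_1,\dots,j_k}\prod_{i=1}^k\bigl(1+j_i\tfrac{a-1}{b}\bigr)^{j_i-1}$, which is the formula for $P_{kb}(X)=(a,b)\text{-}PF_{kb}$ established in Theorem \ref{thm1.5}.

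Finally I would check that the two substitutions are consistent: the substitution in Lemma \ref{thm1.6} ($x_i=b$, $w_i=a-1$) yields each block-factor equal to $b\,(b+j_i(a-1))^{j_i-1}=b^{j_i}\bigl(1+j_i\tfrac{a-1}{b}\bigr)^{j_i-1}$, and since $\sum_i j_i=kb$ the product of the $b^{j_i}$ contributes the overall $b^{kb}$; dividing by $b^{kb}$ is the same as rescaling $x_i\mapsto 1$, $w_i\mapsto\frac{a-1}{b}$ inside each $D^{j_i}(z_i)$. This is precisely the homogeneity already exploited in Theorem \ref{thm1.4}, so no new argument is needed. The only mild obstacle is bookkeeping: one must be sure the counter variables $t_i$ and the step-function constraint are untouched by the rescaling of $x_i,w_i$ (they are, since $t_i\to 0$ and the constraint depends only on the exponents $j_i$, not on the values assigned to $x_i,w_i$), so that the range $J\in spec(k,b)$ is preserved. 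With that verified, the corollary follows by chaining Lemma \ref{thm1.6}, equation (\ref{equ4.2}), and Theorem \ref{thm1.5}.
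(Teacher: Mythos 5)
Your proposal is correct and follows essentially the route the paper intends (the paper leaves this corollary's proof implicit): it is exactly the chain Lemma \ref{thm1.6} $+$ equation (\ref{equ4.2}) $+$ Theorem \ref{thm1.5}, with the observation that rescaling $x_i=b,\,w_i=a-1$ to $x_i=1,\,w_i=\frac{a-1}{b}$ divides each block factor $b^{j_i}\bigl(1+j_i\tfrac{a-1}{b}\bigr)^{j_i-1}$ by $b^{j_i}$, i.e.\ removes the overall $b^{kb}$, while the step-function constraint (depending only on the exponents $j_i$) still restricts the Leibniz expansion to $J\in spec(k,b)$. The only cosmetic point is that the block-independence in the Leibniz expansion comes from the variables of distinct blocks being disjoint under the rules of $K$, not from $t_i\to 0$; this does not affect the argument.
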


	\section{q-analogue for rational parking functions }

In this section I will prove the Theorem \ref{thm2.4}, before which the following lemmas are needed.

\begin{lem}\label{lem5.1}
	For $a \equiv 1 \; (mod\; b)$, the $(a,b)$-parking functions degenerates to the $x$-parking function with $x=(1,\frac{a-1}{b},...,\frac{a-1}{b})$. 
\end{lem}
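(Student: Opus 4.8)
The plan is to reduce Lemma~\ref{lem5.1} to a direct comparison of the inequalities defining the two classes of parking functions, via the correspondence between $(a,b)$-Dyck paths and $u$-parking functions recalled before Example~\ref{exm4.1}. \textbf{Step 1.} I would first make that correspondence explicit in sequence form: a sequence $(c_1,\dots,c_b)$ of positive integers is an $(a,b)$-parking function of length $b$ if and only if its non-decreasing rearrangement satisfies $c_{(i)}\le 1+\lfloor (i-1)\tfrac{a}{b}\rfloor$ for $1\le i\le b$; equivalently it is the $u$-parking function attached to $u=\bigl(1+\lfloor i\tfrac{a}{b}\rfloor\bigr)_{i=0}^{b-1}$, which is exactly the $d=1$ specialization of the vector in Theorem~\ref{thm2.4}, obtained from the labelled $E$-steps by the shift $(i,j)\mapsto(i+1,j+1)$ as in Example~\ref{exm4.1}.

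\textbf{Step 2.} Next I would impose the hypothesis $a\equiv 1\pmod b$ and set $m=\tfrac{a-1}{b}\in\mathbbm{N}$. The one substantive computation is that, for every $j$ with $0\le j\le b-1$,
\[
\Big\lfloor j\tfrac{a}{b}\Big\rfloor=\Big\lfloor jm+\tfrac{j}{b}\Big\rfloor=jm,
\]
since $jm\in\mathbbm{N}$ and $0\le \tfrac{j}{b}<1$. Hence $u_j=1+jm$ for $0\le j\le b-1$, so that $u_{i-1}=1+(i-1)m=\widehat{x}_i$, where $\widehat{x}_i=x_1+\cdots+x_i$ for $x=(1,\overbrace{m,\dots,m}^{b-1})$.

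\textbf{Step 3.} Finally, comparing Step~1 and Step~2, the condition $c_{(i)}\le u_{i-1}$ defining an $(a,b)$-parking function becomes verbatim the condition $c_{(i)}\le\widehat{x}_i$ defining an $x$-parking function with $x=(1,\tfrac{a-1}{b},\dots,\tfrac{a-1}{b})$, so the two sets coincide; as a sanity check, Lemma~\ref{lem1.4} then gives $P_b(x)=1\cdot(1+b\tfrac{a-1}{b})^{b-1}=a^{b-1}$, in agreement with Lemma~\ref{lem1.5}. I expect the only delicate point to be bookkeeping — keeping the $0$-indexed vector $u$ aligned with the $1$-indexed cumulative vector $\widehat{x}$, and pinning down the briefly stated identification of $(a,b)$-parking functions with $u$-parking functions from the discussion before Example~\ref{exm4.1}; the floor evaluation in Step~2 is what actually produces the degeneration, and it works precisely because the index $j$ remains in the range $0\le j\le b-1$.
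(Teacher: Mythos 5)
Your proposal is correct and follows essentially the same route as the paper: the paper's proof is exactly the sandwich/floor computation that under $a\equiv 1\pmod b$ one has $i\frac{a-1}{b}\le\lfloor i\frac{a}{b}\rfloor< i\frac{a-1}{b}+1$ for $0\le i\le b-1$, forcing the bound to be $1+i\frac{a-1}{b}$, which is your Step~2. Your Steps~1 and~3 merely spell out the identification with the $u$-vector and the index bookkeeping that the paper leaves implicit, so there is no substantive difference.
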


\begin{proof}
	Assume $k=\frac{a-1}{b}$, then $a=kb+1$ and $gcd(a,b)=1$. For $i$ from $0$ to $b-1$, I have $k\cdot i=\frac{(a-1)i}{b}\leq x_i\leq a\cdot i/b<k\cdot i+1$. 
	Moreover, since $i/b<1$, $x_i=k\cdot i$ is uniquely determined. 
\end{proof}

\begin{lem}\label{lem5.2}
	Given $X=\{ 1,k,...,k\}$ of length $l$, then a q-analogue for $x$-parking function
	$$ \sum_{c\in \mathcal{P}_l(X)} q^{Z(c)}=q(q+lk)^{l-1}.$$
\end{lem}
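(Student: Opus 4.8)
The plan is to obtain Lemma \ref{lem5.2} as the ``$q$-refinement'' of Lemma \ref{thm1.2} in exactly the same way that the count $a(a+bn)^{n-1}$ was refined at the end of Section 3. Recall that in the paragraph following the proof of Lemma \ref{thm1.2} it was observed that for $X=(a,b,\dots,b)$ one has $Z(c)=j_1 = \mathrm{Card}\{c_r\in c\mid c_r=1\}$, and that this statistic is tracked in the grammar $H$ by the label $x$ appearing in the first forest $S_1$: assigning weight $q$ to one of the $a$ colours of the root-edges (the one corresponding to value $1$) and weight $1$ to the remaining $a-1$ colours yields
\[
\sum_{c\in\mathcal{P}_n(X)}q^{Z(c)}=(q+a-1)\bigl(q+a-1+bn\bigr)^{n-1}=D^n(z)\big|_{z=y=1,\,x=q+(a-1),\,w=b}.
\]
So the first step is simply to specialize this identity to the case at hand: here $X=\{1,k,\dots,k\}$ of length $l$, i.e. $a=1$ and $b=k$, so that $a-1=0$ and the root has a single colour, which must therefore carry weight $q$. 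Substituting $a=1$, $b=k$, $n=l$ gives $(q+0)(q+0+kl)^{l-1}=q(q+lk)^{l-1}$, which is the claim.

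The key steps, in order, are: (1) verify that Lemma \ref{thm1.2} genuinely applies with the parameters $a=1$, $b=k$ — this is immediate since $1,k,\dots,k$ is precisely a basic $x$-parking function vector $(\alpha,\beta,\dots,\beta)$ with $\alpha=1$, $\beta=k$, and Lemma \ref{lem1.4} already guarantees $P_l(X)=1\cdot(1+lk)^{l-1}$; (2) recall the bijection of \cite{ref6} between $X$-parking functions and multicolored labeled forests, under which, per the discussion after Lemma \ref{thm1.2}, the number of $1$'s in $c$ equals the number of trees in the first colour-class $S_1$, equivalently the exponent of $x$ contributed from $S_1$ in the weight $\omega_n(T)$; (3) because here there is only one colour on the root-edges ($a=1$), every root-child edge lies in $S_1$, so $Z(c)=\deg_T(0)$ is exactly the full exponent of $x$ in $\omega_n(T)$; (4) therefore, marking $x$ by $q$ in $D^l(z)\big|_{y=1,\,x=q,\,w=k}=\sum_{j\ge 1}p_j(l)q^{j}k^{l-j}z$ and then setting $z=1$ produces $\sum_{c}q^{Z(c)}$; (5) finally evaluate: this sum is the binomial expansion of $q(q+lk)^{l-1}=q\sum_{j=0}^{l-1}\binom{l-1}{j}q^{j}(lk)^{l-1-j}$, matching $p_{j+1}(l)=\binom{l-1}{j}l^{l-1-j}$ against the corresponding coefficient exactly as in the proof of Lemma \ref{thm1.2} (the factor $n^{n-k}$ versus $(lk)^{\dots}$ discrepancy is absorbed because $w=k$, not $w=1$, so each of the $l-j$ non-root vertices contributes a factor $k$ as well — one must be slightly careful that $p_j(l)=\binom{l-1}{j-1}l^{l-j}$ combined with $w^{l-j}=k^{l-j}$ gives $\binom{l-1}{j-1}(lk)^{l-j}$, which is indeed the coefficient of $q^{j}$ in $q(q+lk)^{l-1}$).

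I expect the only real subtlety — hardly an obstacle — to be bookkeeping in step (5): one must track that the ``$k$'' in $\{1,k,\dots,k\}$ enters twice, once as the specialization $w=k$ (the colour count on non-root edges, giving $w^{l-j}=k^{l-j}$) and once implicitly through $p_j(l)=\binom{l-1}{j-1}l^{l-j}$ being a count of \emph{planted forests} rather than colored ones, so that the product $p_j(l)\,k^{l-j}$ correctly reconstitutes $\binom{l-1}{j-1}(lk)^{l-j}$. An alternative, even shorter route that avoids re-deriving anything is to cite the displayed $q$-identity above directly: it was established (in the text following Lemma \ref{thm1.2}) for arbitrary basic $X=(a,b,\dots,b)$, and one simply sets $a=1$, $b=k$, $n=l$. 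Either way the proof is two or three lines once the right specialization is identified; the main thing to get right is which colour receives the weight $q$, namely the (unique) colour labelling the value $1$.
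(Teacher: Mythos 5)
Your proposal is correct and follows essentially the same route as the paper: both identify $Z(c)$ with the root degree $\deg_T(0)=j_1$ via Yan's multicolored-forest bijection (using that $a=1$ forces every root edge into $S_1$) and then evaluate the grammar $H$ of Lemma \ref{thm1.2} with $x=q$, $w=k$, which is exactly the specialization $a=1$, $b=k$, $n=l$ of the $q$-identity stated after Lemma \ref{thm1.2}. Your coefficient check $p_j(l)k^{l-j}=\binom{l-1}{j-1}(lk)^{l-j}$ is consistent with the paper's argument and adds no divergence.
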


\begin{proof}[Proof of Lemma \ref{lem5.2}.]
	Following Catherine H.Yan in \cite{ref6}, the parking functions have a  $'+1'$ shift with this paper. According to the discussion on \cite{ref6} at the end of Section 3, the specification $j_1=Z(c)$ for $c\in \mathcal{P}_l(X)$.
	In particular, since $x_1=1$, the forest consist only of $S_1$, so $j_1$ represents the number of trees in the forest. Then refer to Lemma \ref{thm1.2} and Step 3 of Proposition \ref{prop4.7} of this paper to learn about turning a multicolored tree into a forest by adding roots. 
	In contrast to the value of the grammar $'H'$ in  (\ref{equ4.1}), where each $'x=q'$ represents a child of the root, and $'w=k'$ is the remaining nodes with $'k'$ color choices for the edge above it.
\end{proof}

\begin{rem}
	On the other hand, when $'a-1'$ is not divisible by $'b'$, there exists an inequality in the values of the nodes $'w_i'$ and $'w_j'$, such that  the q-analogue cannot be determined. 
\end{rem}

\begin{proof}[Proof of Theorem \ref{thm2.4}.]
	Recall that in a division such as Theorem \ref{thm1.5}. For $(da,db)$-parking function $f=(s_1,...,s_{db})$, let $'j'$ be the largest integer satisfying the condition as follows:
	$$ Card~ \{ s_r |s_r\leq 1+(b-1+i)\frac{a-1}{b}\} \geq b+i,$$   
	holds for all $i=0,...,j$. On the other hand, I have $ Card ~	\{ s_r |s_r \leq 1+(i-1)\frac{a-1}{b} \} \geq i$ for $i=1,...,b$ by Lemma \ref{lem5.1}. 
	Let $k=\frac{a-1}{b}$ and $j_1=b+j$, the sequence $' f_1'$ is an $x$-parking function with $x=\{ 1,k,...,k\}$ of length $j_1$. Denote by $'E'$ the set of codes $'r'$ for which parking function satisfies $s_r\leq 1+(j_1-1)\cdot k$, where $|E|=j_1$. The case $'s_i=1'$ can only exist in $ f_1\sim \{ s_r |r\in E \}=\{s_{(1)},..., s_{(j_1)}\}$, since the parking function has $s_r\geq 2+j_1\cdot k$ with the code $r\notin E$. Furthermore,  $Z(f)= Card ~ \{ s_r\in f |s_r=1 \}= Card ~ \{ r\in E |s_r=1 \}$ since $'1'$ only exist in the first part $'f_1'$. 
	Similarly, according to steps 1-3 of Proposition \ref{prop4.7}, such a $(da,db)$-parking function $'f'$ can be partitioned into $(f_1,... ,f_d)^T$, where $f_t\sim \{ s_r |t+(j_1+\cdots +j_{t-1})k\leq s_r\leq t +(j_1+\cdots +j_t -1)k\}$ for $2\leq t\leq d$, and their lengths $J\in spec(k,b)$.

	To record the parking functions with their $Z(f)$, I use the bijection in Proposition \ref{prop4.7} to map the components $(f_1,... ,f_d)^T$ into a multicolored forest $(S_1,... ,S_d)$ with the bijection, and given a common root $'0'$ make it a multicolored tree. 
	Different colors are used to distinguish the edges between $'0'$ and its children in $S_i$ for $i\leq d$. 	
	In this case, each $'1'$ in $'f_1'$ corresponds to a root $'x_1'$ in $'T(f_1)'$,  which is the q-analogue I need. 
	Following Lemma \ref{lem5.2}, it is sufficient to denote $'x_1'$ by $'q'$ in the grammar of Lemma \ref{thm1.6}, and assign the remaining $x_2,...,x_d$ to $'1'$ and $'w'$ to $'k'$. The proof is obtained.
\end{proof}

%
%

In Corollary 2.11 \cite{ref8}, Yue Cai and Catherine H.Yan provide the number of $(1,b)$-parking functions. When $u=(1,...,1,2,...,2,3,...)$ is of length $nb+b-1$, the resulting expression for $PF_{nb+b-1}(u)$ is $(n+1)^{nb+b-2}$.

\begin{cor}
	For $k=0,~ d=n+1$, Theorem \ref{thm2.4} translates into considering $u=(1,...,1,2,...,2,3,...)$, then
	$$ \sum_{c\in \mathcal{PF}_{db}(u)} q^{Z(c)}=  \sum_{J\in spec(d,b) } \binom{db}{j_1,...,j_d} q^{j_1} .
	$$
	
\end{cor}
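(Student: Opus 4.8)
The plan is to specialize Theorem~\ref{thm2.4} to the degenerate case $a-1 = 0$ (equivalently $k = \tfrac{a-1}{b} = 0$, so $a = 1$), which is precisely the regime covered by Cai--Yan's $(1,b)$-parking function count, and to read off what each factor on the right-hand side collapses to. First I would check that the hypothesis $a \equiv 1 \pmod b$ of Theorem~\ref{thm2.4} is satisfied trivially when $a = 1$, so the theorem applies verbatim; here $d = n+1$ and the relevant vector is $u = (1 + \lfloor i\,\tfrac{a}{b}\rfloor)\vert_{i=0}^{db-1}$, which for $a = 1$ is $u = (1 + \lfloor i/b \rfloor)\vert_{i=0}^{db-1} = (1,\dots,1,2,\dots,2,3,\dots)$ with each value repeated $b$ times --- exactly the $u$ in the statement of the corollary (and, when $d = n+1$, of length $db = (n+1)b = nb + b$; note the Cai--Yan length $nb+b-1$ differs by the usual $+1$ shift convention mentioned after Lemma~\ref{lem5.2}).

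Next I would substitute $\tfrac{a-1}{b} = 0$ into the right-hand side of Theorem~\ref{thm2.4}. The factor $q\bigl(q + j_1 \cdot \tfrac{a-1}{b}\bigr)^{j_1 - 1}$ becomes $q \cdot q^{\,j_1 - 1} = q^{\,j_1}$, and each factor $\bigl(1 + j_t \cdot \tfrac{a-1}{b}\bigr)^{j_t - 1}$ in the product over $t = 2,\dots,d$ becomes $1^{\,j_t - 1} = 1$. Hence the entire summand reduces to $\binom{db}{j_1,\dots,j_d} q^{\,j_1}$, and summing over $J \in \mathrm{spec}(d,b)$ gives exactly the claimed identity
$$\sum_{c \in \mathcal{PF}_{db}(u)} q^{Z(c)} = \sum_{J \in \mathrm{spec}(d,b)} \binom{db}{j_1,\dots,j_d} q^{\,j_1}.$$
One should also note that $\mathrm{spec}(d,b)$ is the set $\{J = (j_1,\dots,j_d) : j_1 \geq b,\ \dots,\ j_1 + \cdots + j_t \geq tb,\ \dots,\ j_1 + \cdots + j_d = db,\ j_i \geq 0\}$ as defined in Theorem~\ref{thm1.5}, with $k$ replaced by $d$, so no reinterpretation of the index set is needed.

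The only genuinely delicate point --- and the one I would spend a sentence guarding against --- is the consistency of the edge case $a = 1$ with the combinatorial scaffolding behind Theorem~\ref{thm2.4}: the proof of that theorem routes through the partition of Proposition~\ref{prop4.7} and the grammar of Lemma~\ref{thm1.6}, both of which implicitly used $\gcd(a,b) = 1$ and the bijection of~\cite{ref6} with color parameter $a-1$ on the non-root edges. When $a = 1$ this color parameter is $0$, meaning the trees $T_t$ attached to each block $f_t$ are \emph{forced to be single roots} (no non-root vertices can appear), so in fact $f_t$ has length $j_t = b$ for every $t$ except possibly the first --- but this is automatically encoded in the sum over $\mathrm{spec}(d,b)$, since the weight $q^{j_1}$ together with the multinomial already account for all admissible specifications, and the terms with $j_t \neq b$ for $t \geq 2$ simply get coefficient $1$. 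Thus no contradiction arises; the corollary is a clean substitution. Finally I would cross-check numerically against Cai--Yan's $PF_{nb+b-1}(u) = (n+1)^{nb+b-2}$ by setting $q = 1$ (up to the shift-of-length convention), confirming $\sum_{J \in \mathrm{spec}(n+1,b)} \binom{(n+1)b}{j_1,\dots,j_{n+1}} = (n+1)^{(n+1)b - 1}$, which is the $a = 1$ instance of Theorem~\ref{thm1.5}.
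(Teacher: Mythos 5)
Your core argument is exactly the paper's (implicit) derivation: the corollary is stated without separate proof precisely because it is the specialization $a=1$, i.e.\ $\tfrac{a-1}{b}=0$, of Theorem~\ref{thm2.4}, under which $q\bigl(q+j_1\tfrac{a-1}{b}\bigr)^{j_1-1}$ collapses to $q^{j_1}$ and each factor for $t\ge 2$ collapses to $1$, with $u=(1+\lfloor i/b\rfloor)\vert_{i=0}^{db-1}=(1,\dots,1,2,\dots,2,\dots)$. That part is correct and complete.

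Two of your side remarks, however, are wrong and should be dropped. First, it is not true that for $a=1$ the blocks $f_t$ with $t\ge 2$ are forced to have length $j_t=b$: any $J\in spec(d,b)$ occurs (e.g.\ $b=1$, $d=2$ admits $J=(2,0)$); what is true is that each block of length $j_t$ admits exactly one admissible filling, which is why its factor is $1$ --- a point you in fact state afterwards, so the earlier claim contradicts your own conclusion. Second, the closing ``cross-check'' identity $\sum_{J\in spec(n+1,b)}\binom{(n+1)b}{j_1,\dots,j_{n+1}}=(n+1)^{(n+1)b-1}$ is false: for $b=1$, $n=1$ the left side is $\binom{2}{1,1}+\binom{2}{2,0}=3$ while the right side is $2$. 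At $q=1$ the sum equals $(1,b)\text{-}PF_{db}$ (the $a=1$ case of Theorem~\ref{thm1.5}), which is not given by Cai--Yan's formula $(n+1)^{nb+b-2}$; that formula concerns length $nb+b-1$, a genuinely different length, and no ``$+1$ shift'' converts one into the other. Neither error affects the validity of the substitution proof itself, but the false identity should not stand as stated.
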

\bigskip
\noindent
{\bf Acknowledgment.} I extend my sincere gratitude to William Y.C. Chen for his exceptional supervision and mentorship in the realm of context-free grammar. Additionally, I appreciate the valuable comments and suggestions provided by Catherine H. Yan. Furthermore, I would like to express my thanks to Ming Zeng for his support and guidance in the paper-writing process. This work has received partial support from the National Natural Science Foundation of China with Grant No. 12071344.

\end{document}